\documentclass[10pt]{amsart}
\usepackage{amssymb,latexsym,pstricks}
\usepackage[normalem]{ulem}
\usepackage{orcidlink}
\usepackage{cancel}
\usepackage{changes}
\topskip  \usepackage{color}
\usepackage{hyperref}
\usepackage{tensor}
\usepackage{enumitem}
\usepackage{mathabx}
\usepackage{comment}
\usepackage{amsfonts,amssymb,amsmath,amsthm,graphicx,latexsym}
\usepackage{mathrsfs,dsfont}
\usepackage{pstricks,pst-node,pst-plot}
\usepackage{pgfplots}
\usepackage{tikz}
\usepackage{bm}
\usepackage{caption}
\theoremstyle{thmstyleone}%
\newtheorem{theorem}{Theorem}[section]
\newtheorem{proposition}[theorem]{Proposition}%
\newtheorem{corollary}[theorem]{Corollary}

\theoremstyle{thmstyletwo}%
\newtheorem{example}[theorem]{Example}%
\newtheorem{remark}[theorem]{Remark}%

\theoremstyle{thmstylethree}%
\newtheorem{definition}[theorem]{Definition}%

\newcommand{\tp}{\mathbin{\hbox{$\bigcirc$\hbox to 0pt{\hspace{-0.81em}$\scriptstyle\top$\hfil}}}}

\begin{document}
\title[Normal ordering in the $(p,q)$-deformed generalized Weyl algebra]{Normal ordering in the $(p,q)$-deformed generalized Weyl algebra. II: Interpretation in terms of rook placements}
\author[T. Mansour]{Toufik Mansour $^{1, \orcidlink{0000-0001-8028-2391}}$}
\address{$^1$ Department of Mathematics, University of Haifa, 3498838 Haifa, Israel, tmansour@univ.haifa.ac.il}
\author[L. Oussi]{Lahcen Oussi$^{2, \orcidlink{0000-0001-9804-0761}}$}
\address{$^2$ Faculty of Pure and Applied Mathematics, Wrocław University of Science and Technology, Wybrzeże Wyspiańskiego 27, 50-370 Wrocław, Poland, lahcen.oussi@pwr.edu.pl, oussimaths@gmail.com}
\author[M. Schork]{Matthias Schork $^{3, \orcidlink{0000-0002-9759-6618}}$}
\address{$^3$ Institute for Mathematics, Würzburg University, Emil-Fischer Str. 40, 97074 Würzburg, Germany, matthias.schork@mathematik.uni-wuerzburg.de}


\begin{abstract}
In this paper, we investigate the combinatorial structure arising from the $(p, q)$-deformed generalized Weyl algebra generated by variables $X, Y$, and $Z_p$, satisfying the $(p, q)$-commutation relations $XY-qYX=h Y^sZ_{p}, XZ_p=pZ_pX$, and $Z_pY=pYZ_p$, where $s\in \mathbb{N}_0$. Our primary objective is to use the normal ordering process defined by these relations to develop a novel model of $(p, q)$-deformed rook theory. Specifically, we introduce a new framework of $(p, q)$-deformed $s$-rook numbers derived from this normal ordering process. Utilizing these combinatorial models, we provide explicit combinatorial interpretations for the associated $(p, q)$-generalized Stirling numbers via rook placements on staircase boards. Our results extend several classical and recent formulations in the literature to the general $p\neq 1$ setting.
\end{abstract}
\keywords{$(p, q)$-deformed generalized Weyl algebra, Normal ordering, Ferrers board, Rook numbers, Stirling numbers.}
\subjclass[2020]{05A19, 05A30, 81R99, 11B73, 05A10.}
\maketitle
\section{Introduction}\label{sec1}
A commutation relation characterizes the behavior of given operators $X$ and $Y$ when their order of operation is reversed. To formalize this, we use the standard commutator  $[X, Y]:=XY-YX$. If the operators $X$ and $Y$ commute, the commutator vanishes. Noncommutative structures are widespread in both mathematics and physics, appearing naturally in matrix theory, quaternions, and Lie algebras, etc. These algebraic frameworks are essential in discrete mathematics and quantum theory. For instance, analyzing the quantum harmonic oscillator requires systematic reordering rules to evaluate mixed expressions. The primary mathematical setting for studying such relations is the standard {\em Weyl algebra}, which is the complex unital algebra generated by two operators $X$ and $Y$ satisfying the commutation relation 
\begin{equation}\label{Walg}
			[X, Y]=I, 
\end{equation} 
where $I$ denotes the identity operator. From a functional perspective, these generators are often represented as operators acting on smooth functions of a single variable, where $X$ acts as the derivative operator $D=\frac{d}{dx}$ and $Y$ acts as the multiplication operator $M$, defined by $(Mf)(x)=xf(x)$. Therefore, this concrete realization makes the Weyl algebra the classical example of a ring of differential operators. In the context of quantum mechanics, this structure is realized via the annihilation operator $\hat{a}$ and creation operator $\hat{a}^{\dag}$ of the harmonic oscillator.

An expression in the Weyl algebra is said to be in  {\em normal ordered form} if all letters $Y$ stand to the left of all the letters $X$. Finding the normal ordered form of an arbitrary operator word is a fundamental problem with numerous applications. In 1823, Scherk showed that normal ordering the specific powers word $(YX)^n$ naturally involves the Stirling numbers of  the second kind as coefficients: 
\begin{equation}\label{Scherk}
	(YX)^n=\sum_{k=1}^n S(n,k) Y^kX^k.
\end{equation}
For a comprehensive historical overview, see~\cite{TMMS2016} (or also \cite{BF2011}, where Scherk's dissertation is discussed). This connection between operator words and combinatorics was later rediscovered independently within the physics community by Katriel in 1974 using creation and annihilation operators. Since then, numerous generalizations have been investigated, see, e.g.,~\cite{BF2011,TMMS2016,MS2021}.

In the past years, a great deal of research has been dedicated to $q$-deformed structures by various authors. Generally, a $q$-deformation involves introducing a parameter $q$ into the defining algebraic relations such that the undeformed structures are recovered in the limiting case as  $q\to 1$. For instance, the {\em $q$-deformed Weyl algebra} is given by $XY-qYX=I$. Another fundamental structure is the {\em quantum plane}~\cite{YIM1988}, whose generating operators satisfy the $q$-commutation relation 
\begin{equation}\label{qcrintro}
	XY=qYX.
\end{equation}
The algebraic properties and consequences of~\eqref{qcrintro} are deeply connected to identities for $q$-special functions (see, e.g., \cite{KHT1995, KTH1997}). Just as in the classical setting, normal ordering in these $q$-deformed frameworks yields formulas analogous to~\eqref{Scherk} where the coefficients become $q$-deformed Stirling numbers of the second kind.  Within the physics literature, many $q$-deformed models have been extensively studied. Parallel to the undeformed setting, the commutation relations of the $q$-deformed Weyl algebra are realized explicitly through $q$-deformed creation and annihilation operators.

More recently, these mathematical models have been generalized to multiparameter $(p,q)$-deformations, introducing a second parameter $p$ such that, for $p\to 1$, the single parameter $q$-deformation is recovered. Driven by both algebraic and physical motivations, $(p, q)$-extensions of the Weyl algebra and associated oscillator have been constructed in~\cite{JG2016,EKLS1993}, leading to $(p, q)$-deformed Stirling numbers upon normal ordering reductions~\cite{KK1992} and motivating the study of generalized $(p, q)$-special functions~\cite{BK1994,Jag1997}. Simultaneously, the $(p, q)$-deformed Stirling numbers were introduced in relation to a $(p, q)$-statistic for rook placements on staircase board~\cite{MWDW1991}. Subsequently, various algebraic properties and further generalizations have been developed.  

Rook theory was introduced by Kaplansky and Riordan \cite{KR1946}, and was developed subsequently by many authors, see, e.g., \cite{KBJR2003, DFMPS1970, AGJR1986, JGJH2000, JGJJDRRW1976, JRMW2004, JR1958, RPS1997}. Rook theory treats the way of placing a collection of rooks on a board, in particular {\it Ferrers boards}. Navon \cite{AMN1973} discovered the connection between normal ordering and rook theory, and Varvak \cite{AV2005} brought this into the precise form used since then. Recall that for a word $w$ in letters $X$ and $Y$ satisfying \eqref{Walg} we say that it is in a normal ordered form if all letters $Y$ stand to the left of all letters $X$. That is, $w=\sum_{i, j} r_{i, j}Y^iX^j$ for some coefficients $r_{i, j}$. The connection between Ferrers boards and normal ordering is as follows: For a word $w$ in $X$ and $Y$, start from $(0, 0)$ in $\mathbb{Z}^2$ and represent the letter $X$ as a step to the right and the letter $Y$ as a step up, reading the word from left to right. Then the resulting path outlines a Ferrers board denoted by $B_w$ and the normal ordering coefficients of $w$ are given by rook numbers of the Ferrers board $B_w$. For instance, $w=(YX)^n$ outlines the {\em staircase board} $J_n$, and it is well known that the rook numbers of the staircase board are given by Stirling numbers of the second kind, in accordance with \eqref{Scherk}. This connection has been studied and generalized by many authors. In particular, Varvak \cite{AV2005} gave an interpretation of the normal ordering coefficients of words in $X$ and $Y$ satisfying $XY=YX+hY^s$ (with $s\in \mathbb{N}_0$) in terms of the $i$-rook numbers introduced by Goldman and Haglund \cite{JGJH2000}. This was extended to the $q$-deformed case by Celeste et al. \cite{CCG2017}.

The main objective of this paper is to establish the  combinatorial behavior of the variables $X, Y,$ and $Z_p$ satisfying the following $(p, q)$-commutation relations: 
\begin{align}
	XY-qYX &= hf(Y)Z_p,\label{pqcr1} \\
	Z_pY&=pYZ_p, \label{pqcr2}\\
	XZ_p&=pZ_pX, \label{pqcr3}\\
	Z_1 &=I,\label{pqcr4}
\end{align}
where $f$ is a polynomial function. 
Within this algebraic framework, a word $\omega$ in the letters $X, Y$, and $Z_p$ is said to be in {\em normal ordered form} if all the letters $Y$ stand to the left of all the letters $Z_p$ which stand to the left of all the letters $X$. That is, $\omega$ can be uniquely expanded as a linear combination of the form $\omega=\sum_{k,l,m} a_{k,l,m}Y^kZ_p^lX^m$ for some coefficients $a_{k,l,m}$, the {\em normal ordering coefficients}. For instance, setting $p=1$ yields the $q$-commutation relation $XY-qYX = hf(Y)$ studied in \cite{CCG2017,TMMS2011}. Additionally, by taking the limit $q\to 1$, the resulting algebra was studied in depth in \cite{BLO2013,BLO2015,BLO2015a}. Conversely,  by identifying  $X$ with the $(p, q)$-derivative operator $D_{p, q}$, $Y$ with the operator of multiplication with $x$, and $Z_p$ with the Fibonacci operator \cite{GCMDCH}, one obtains the $(p, q)$-commutation relation established in \cite{LO2021}.

In this work, we specialize to the case where $f(Y)=Y^s$ is a monomial, i.e., to the {\em $(p,q)$-deformed generalized Weyl algebra}. Under this choice, we provide a combinatorial interpretation of the normal ordering coefficients in terms of rook theory. If $\omega$ is a word in the letters $X,Y$ and $Z_p$ in the $(p,q)$-deformed generalized Weyl algebra, we introduce as intermediate step so called {\em augmented Ferrers boards} where the letters $X$ and $Y$ of $\omega$ outline a Ferrers board in the usual fashion but where in addition the letters $Z_p$ are represented by particular labels on this Ferrers board. This is then translated into appropriate $(p,q)$-weights for $s$-rook placements, and, thus, to adapted $(p,q)$-deformed $s$-rook numbers. 

This paper is the second part of a series of 3 papers. In a prequel \cite{TLM1}, we studied normal ordering words in the $(p,q)$-deformed generalized Weyl algebra from a more algebraic perspective. In further work \cite{TLM3}, the binomial formula for $(X+Y)^n$ is studied in the $(p,q)$-deformed generalized Weyl algebra.

The remainder of this paper is organized as follows. In Section~\ref{sprelim}, we provide a brief overview of the foundational normal ordering results that will be utilized in the subsequent sections. Section~\ref{srook} is dedicated to establishing the connection between normal ordering and rook theory. In Section~\ref{spqrook}, we  introduce the $(p, q)$-deformed rook numbers adapted to this normal ordering framework. Here we derive several operational identities, extending the previous results to the case $p\neq 1$ and providing a combinatorial interpretation for the $(p, q)$-deformed generalized Stirling numbers. Finally, in Section~\ref{concl}, we present some conclusions and prospective outlook.

\section{Preliminaries}\label{sprelim}
In this section, we briefly recall some basic definitions and facts about the generalized Stirling numbers associated with the $(p, q)$-deformed Weyl algebra to fix the notation and provide the necessary background for the present paper, see \cite{TLM1} for more details.

Let $p$ and $q$ be two indeterminates. For any non-negative integer $n\in\mathbb{N}_0:=\mathbb{N}\cup\{0\}$, we define the {\em twin-basic numbers} by
$$
[n]_{p, q}=\frac{p^n-q^n}{p-q},
$$ 
and are viewed as a $(p,q)$-deformation of the natural numbers. Setting $p=1$, one recovers the conventional $q$-deformed numbers, namely $[n]_{1, q}=\frac{1-q^n}{1-q}=[n]_{q}$. Alternatively, one has $[n]_{p, q}=p^{n-1}+p^{n-2}q+\cdots + pq^{n-2}+q^{n-1}$, which reduces, for $p=1$,  to the well-known geometric series $[n]_{q}=1+q+q^2+\cdots + q^{n-1}$. The associated $(p,q)$-factorials and $(p, q)$-binomial coefficients are given, respectively, by
$$
[n]_{p, q}!=[n]_{p, q}[n-1]_{p, q}\cdots [2]_{p, q}[1]_{p, q},\,\,\, \text{with }\, [0]_{p, q}!=1,
$$
and 
$${n\choose k}_{p, q}=\frac{[n]_{p, q}!}{[n-k]_{p, q}![k]_{p, q}!}.$$ 
Furthermore, the corresponding {\em $(p,q)$-derivative} operator $D_{p,q}$ acts on a given function $f(x)$ as
$$
(D_{p,q}f)(x):=\frac{f(px)-f(qx)}{(p-q)x}.
$$
In the limiting case where  $p=1$, this operator reduces to the {\em Jackson-derivative} $D_q$, which is explicitly given by $(D_{1,q}f)(x)=\frac{f(x)-f(qx)}{(1-q)x}=(D_{q}f)(x)$.  The action of $D_{p,q}$ on monomials is given, for $n \in \mathbb{N}$, by 
$$
D_{p,q}x^n=[n]_{p, q}x^{n-1}.
$$
We refer the reader to~\cite{LO2024} for more detailed discussion on $D_{p,q}$ and related references.

Let us recall the following definition from~\cite{TLM1}. 
\begin{definition}\label{DefpqGenWeyl}
	The {\em $(p, q)$-deformed generalized Weyl algebra $A_{s;h|p,q}$} is defined, for $s\in\mathbb{N}_{0}, h\in\mathbb{C}\setminus\{0\}$ and $p, q\in\mathbb{C}$, as the complex unital algebra generated by variables $X,Y$ and $Z_p$ satisfying 
	\begin{equation}\label{pqdefgenWeyl}
		XY-qYX = h Y^sZ_p, \,\, Z_pY=pYZ_p, \,\, XZ_p=pZ_pX, \,\, Z_1 =I.
	\end{equation}
\end{definition}
It is worth mentioning that, for $p=1$, the above definition reduces to the $q$-deformed generalized Weyl algebra $A_{s;h|1,q}=A_{s;h|q}$ \cite[Definition 1.25]{TMMS2016}, which  was first introduced in \cite{TMMS2011}. The special case where $p=q=h=1$ was established by Burde \cite{Burde2005} (in the context of matrices $X$ and $Y$), and Varvak \cite{AV2005} (see also \cite[Section 9.1]{TMMS2016}). 

 There is a deep connection between normal ordering and Stirling numbers. Namely, in this context,  the {\it generalized Stirling numbers} $\mathfrak{S}_{s;h}(n,k|p,q)$ were introduced as normal ordering coefficients of $(YX)^n$ in $A_{s;h|p,q}$ as follows (see \cite[Definition 3.3]{LO2021}),
  \begin{equation}\label{StirGen}
  	(YX)^n=\sum_{k=0}^n \mathfrak{S}_{s;h}(n,k|p,q) \, Y^{s(n-k)+k}Z_p^{n-k} X^{k}.
  \end{equation}
  Moreover, they satisfy the recurrence relation \cite[Theorem 3.5]{LO2021}
  \begin{equation}\label{Stirlingrecu}
  	\mathfrak{S}_{s;h}(n+1,k|p,q)=p^{n-k+1}q^{s(n-k+1)+k-1}\mathfrak{S}_{s;h}(n,k-1|p,q)+h[s(n-k)+k]_{p, q}\mathfrak{S}_{s;h}(n,k|p,q).
  \end{equation}
  The generalized Stirling numbers $\mathfrak{S}_{s;h}(n,k|p,q)$ in~\eqref{StirGen} are defined as normal ordering coefficients of the particular word $(YX)^n$. For arbitrary words $\mathrm{w}_{{\bf m}, {\bf n}}=Y^{n_r}X^{m_r}\cdots Y^{n_1}X^{m_1}$, with ${\bf m}=(m_1, \ldots, m_r)\in\mathbb{N}_0^{r}$ and ${\bf n}=(n_1, \ldots, n_r)\in\mathbb{N}_0^r$,  the following generalized Stirling numbers are considered in~\cite{TLM1}:
  \begin{definition}\label{rooknum}
  	Let $X, Y$ and $Z_p$ be variables satisfying the commutation relations \eqref{pqdefgenWeyl} of the $(p, q)$-deformed generalized Weyl algebra $A_{s;h|p,q}$. The $(p, q)$-deformed generalized Stirling numbers $S_{s, h; p, q}^{{\bf m}, {\bf n}}[k]$ are defined as normal ordering coefficients of the string $\mathrm{w}_{{\bf m}, {\bf n}}=Y^{n_r}X^{m_r}\cdots Y^{n_1}X^{m_1}$, i.e., by
  	\begin{equation}\label{pqrook}
  		\mathrm{w}_{{\bf m}, {\bf n}}=\sum_{k=m_1}^{|{\bf m}|}S_{s, h; p, q}^{{\bf m}, {\bf n}}[k]Y^{|{\bf n}|-(|{\bf m}|-k)(1-s)}Z_{p}^{|{\bf m}|-k}X^{k},
  	\end{equation}
  \end{definition}
  where $|{\bf m}|=\sum_{j=1}^{r}m_j$ and $|{\bf n}|=\sum_{j=1}^{r}n_j$.
  Note that, for the special case ${\bf m}={\bf 1}_r=(
  \raisebox{0pt}[\height][0pt]{$\underbrace{1, 1\ldots1}_{r\ \text{times}})$}$ and ${\bf n}={\bf 1}_r=(
  \raisebox{0pt}[\height][0pt]{$\underbrace{1, 1\ldots1}_{r\ \text{times}})$}$, we obtain
  \vspace{.5cm}
  \begin{equation}
  	(YX)^r=\sum_{k=1}^{r}S_{s, h; p, q}^{{\bf 1}_r,{\bf 1}_r}[k] Y^{s(r-k)+k}Z_{p}^{r-k}X^{k},
  \end{equation}
  which by comparison with \eqref{StirGen} shows that $S_{s, h; p, q}^{{\bf 1}_r,{\bf 1}_r}[k]=\mathfrak{S}_{s;h}(r,k|p,q)$, for $r \geq 1$.

\section{Connections Between Normal Ordering and Rook Placements}\label{srook}
In this section, we briefly recall the connection between normal ordering and rook numbers. Before we do this, we first recall the definitions and some notions of rook theory. 

\subsection{File numbers, rook numbers and $i$-rook numbers}\label{SectRook} 
Let us first recall some basic tools about partitions from~\cite{TMMS2011}. A {\em partition} $\lambda=(\lambda_1,\lambda_2,\dots,\lambda_{\ell}) \equiv \lambda_1\lambda_2\ldots\lambda_{\ell}$ is a weakly decreasing sequence of positive integers. We denote the sum of the parts of $\lambda$ by $|\lambda|=\sum_{i=1}^{{\ell}}\lambda_i$. Following~\cite{TLM1}, we associate with each partition $\lambda = \lambda_1\dots \lambda_{\ell}$ a Young diagram $Y_{\lambda}$ (a left-justified diagram consisting of $|\lambda|$ boxes) of shape $\lambda$, where the $j$-th column contains $\lambda_j$ boxes, for $1\leq j \leq \ell$. 

We align the columns on the top so we identify the Young diagram $Y_{\lambda}$ with the Ferrers board denoted by $B_{\lambda}$. Given a Ferrers board $B$, we call a placement of $k$ rooks in $B$ such that there is at most one rook in each column a {\em file placement of $k$ rooks}, or also a $k$-{\em file placement}. The set of all $k$-file placements on $B$ will be denoted by $\mathcal{F}_k(B)$, and $f_k(B)=|\mathcal{F}_k(B)|$ will be called $k$-{\em th file number of $B$}. A $k$-{\em rook placement} is a special kind of file placement where in addition no two rooks are in the same row (i.e., the $k$ rooks are {\it non-attacking}). The set of all $k$-rook placements on $B$ will be denoted by $\mathcal{R}_k(B)$, and $r_k(B)=|\mathcal{R}_k(B)|$ will be called $k$-{\em th rook number of $B$}. See Figure~\ref{FigFile} for an example of a $4$-file placement on the Ferrers board $B_{33211}$ associated to the partition $\lambda=33211$.

\begin{figure}[h]
\centering
\begin{tikzpicture}

\draw (0,0.5) -- (1,0.5);

\draw (0.5,0.5) -- (0.5,1.5);
\draw (1,0.5) -- (1,2);  
\draw (1.5,1) -- (1.5,2); 
\draw (2,1.5) -- (2,2);
\draw (2.5,1.5) -- (2.5,2);  

\draw (0,0.5) -- (0,2);
\draw (0.5,1) -- (0.5,2); 
\draw (0,1) -- (1.5,1); 
\draw (0,1.5) -- (2.5,1.5); 
\draw (0,2) -- (2.5,2);

\filldraw (0.25,0.75) circle (3pt);
\filldraw (0.75,1.25) circle (3pt);
\filldraw (1.25,1.75) circle (3pt);
\filldraw (1.75,1.75) circle (3pt);
\end{tikzpicture}
\caption{The Ferrers board $B_{33211}$ with a $4$-file placement which is not a $4$-rook placement.}\label{FigFile}
\end{figure}
\begin{example}
If the board $B_{\lambda}$ has $\ell$ columns, then clearly $f_k(B_{\lambda})=r_k(B_{\lambda})=0$ if $k > \ell$. Furthermore, $f_{1}(B_{\lambda})=r_1(B_{\lambda})=|\lambda|$, the number of boxes. For $k=\ell$, we have $f_{\ell}(B_{\lambda})=\prod_{j=1}^{\ell}\lambda_j$ since we can choose the boxes in the $\ell$ columns independently. Let us consider the {\em staircase board} $J_n=B_{(n-1)\cdots 321}$. It is well known (see, e.g., \cite[Section 2.4.4]{TMMS2016} and the references given therein) that 
\begin{equation}\label{StairEq}
r_{n-k}(J_n)=S(n,k), \,\,\,\, f_{n-k}(J_n)=|s(n,k)|,
\end{equation}
where $S(n,k)$ (resp., $s(n,k)$) denote the Stirling numbers of the second (resp., first) kind. 
\end{example}

The above model of rook and file placements was generalized by Goldman and Haglund \cite{JGJH2000} as follows. Let $i\in \mathbb{N}_0$ and let $B$ be a Ferrers board. The {\em $i$-row creation rule} for rook placements means that each time we choose to place a rook, we create $i$ new rows to the left of the box. The rooks are placed on the Board $B$ going from right to left. When creating $i$ new rows, we interpret the resulting top row as the row belonging to the rook just placed while the $i$ rows beneath it as new rows.
\begin{figure}[h]
\centering
\begin{tikzpicture}

\draw (0,0.5) -- (1,0.5);

\draw (0.5,0.5) -- (0.5,1.5);
\draw (1,0.5) -- (1,2);  
\draw (1.5,1) -- (1.5,2); 
\draw (2,1.5) -- (2,2);
\draw (2.5,1.5) -- (2.5,2);  

\draw[dashed] (0,1.25) -- (0.5,1.25);
\draw[dashed] (0,1.75) -- (1.5,1.75);
\draw[dashed] (0,1.62) -- (1,1.62);

\draw (0,0.5) -- (0,2);
\draw (0.5,1) -- (0.5,2); 
\draw (0,1) -- (1.5,1); 
\draw (0,1.5) -- (2.5,1.5); 
\draw (0,2) -- (2.5,2);

\filldraw (0.75,1.35) circle (1.5pt);
\filldraw (1.25,1.65) circle (1.5pt);
\filldraw (1.75,1.85) circle (1.5pt);
\end{tikzpicture}
\caption{The Ferrers board $B_{33211}$ with a $3$-rook placement with 1-row creation rule.}\label{FigFile2}
\end{figure} 

Following Goldman and Haglund \cite{JGJH2000}, we define the $i$-rook numbers as follows.
\begin{definition} Let $i\in \mathbb{N}_0$. Given a Ferrers board $B$, the {\em $k$-th $i$-rook number} $r_k^{(i)}(B)$ is the number of ways to place $k$ non-attacking rooks on the board $B$ going from right to left, creating $i$ new rows to the left of each rook.  
\end{definition}
Clearly, for $i=0$ one recovers the conventional rook numbers. For $i=1$, each time we place a rook, we create a new row to the left. Thus, for the next rook to place, in each column to the left one has the same amount of boxes to choose from as the column has. In other words, the placement of $k$ rooks with 1-row creation rule corresponds to a $k$-file placement. Thus,  
\begin{equation}\label{RookRed}
r_k^{(0)}(B)=r_k(B), \,\,\,\, r_k^{(1)}(B)=f_k(B). 
\end{equation}

\begin{remark}\label{IRookEq}
In the original definition of the $i$-rook numbers given above, one creates $i$ new rows to the left of the rook placed, but subsequent rooks have to be placed such that they are non-attacking, i.e., from the resulting $(i+1)$ rows to the left of the rook placed only $i$ can be used for the subsequent placements of a rook. Equivalently, we can interprete this rule as creating $(i-1)$ new rows to the left and where subsequent placements of a rook are not restricted, i.e., rooks in the same row are allowed. In the following, this equivalent interpretation will also be used.
\end{remark}

\subsection{Normal ordering and rook numbers}\label{SectNORook}
There exists a close relation between normal ordering in the generalized Weyl algebra and rook numbers. We let $p=q=1$ so that  \eqref{pqcr1} becomes $XY-YX=hY^s$ (for $s \in \mathbb{N}_0$), the commutation relation of the generalized Weyl algebra $A_{s;h|1,1}$ (Definition~\ref{DefpqGenWeyl}, for $p=q=1$). Following Varvak \cite{AV2005}, we associate to a word $\omega$ in $X$ and $Y$ a Ferrers board in $\mathbb{Z}^2$ as follows. Start in $(0,0)$ and represent each letter $X$ as a step to the right, and each letter $Y$ as a step up, reading the word from left to right. The resulting path outlines a Ferrers board $B_{\omega}$.
\begin{example}
The word $XXYXYXXY$ outlines the board $B_{XXYXYXXY}=B_{33211}$ from Figure~\ref{FigFile2}. As another example, the word $(YX)^n=YXYX\cdots YX$ outlines the staircase board $J_{n}$.  
\end{example}
Let us consider a word $\mathrm{w}_{{\bf m}, {\bf n}}=Y^{n_r}X^{m_r}\cdots Y^{n_1}X^{m_1}$ where $XY-YX=hY^s$. To this word we associate a Ferrers board $B_{\mathrm{w}_{{\bf m}, {\bf n}}}$ as described above. In this situation, Varvak \cite[Theorem 7.1]{AV2005} showed that (after correcting a typo) the normal ordered form of $\mathrm{w}_{{\bf m}, {\bf n}}$ is given in terms of $s$-rook numbers by
\begin{equation}\label{RookVarvak0}
\mathrm{w}_{{\bf m}, {\bf n}}=\sum_{j=0}^{|{\bf m}|-m_1} h^j r_j^{(s)}(B_{\mathrm{w}_{{\bf m}, {\bf n}}}) Y^{|{\bf n}|-j(1-s)}X^{|{\bf m}|-j}.
\end{equation}
Using $k=|{\bf m}|-j$, this can be written equivalently as
\begin{equation}\label{RookVarvak}
\mathrm{w}_{{\bf m}, {\bf n}}=\sum_{k=m_1}^{|{\bf m}|} h^{|{\bf m}|-k} r_{|{\bf m}|-k}^{(s)}(B_{\mathrm{w}_{{\bf m}, {\bf n}}}) Y^{|{\bf n}|-(|{\bf m}|-k)(1-s)}X^{k}.
\end{equation}
Comparing \eqref{RookVarvak} with~\eqref{pqrook} yields 
$$
S_{s, h; 1, 1}^{{\bf m}, {\bf n}}[k]=h^{|{\bf m}|-k} r_{|{\bf m}|-k}^{(s)}(B_{\mathrm{w}_{{\bf m}, {\bf n}}}) .
$$
\begin{example}
Let us specialize to $s=0$ and $h=1$, i.e., the Weyl algebra where $XY-YX=I$. If we consider the word $\mathrm{w}_{{\bf m}, {\bf n}}=(YX)^n$, for $n \in \mathbb{N}$, then $B_{\mathrm{w}_{{\bf m}, {\bf n}}}=J_{n}$ and \eqref{RookVarvak} reduces to
$$
(YX)^n=\sum_{k=1}^{n}  r_{n-k}(J_{n}) \, Y^{k}X^{k}=\sum_{k=1}^{n}  S(n,k) \,  Y^{k}X^{k},
$$
where we used in the first step \eqref{RookRed} and in the second step \eqref{StairEq}. If $XY-YX=Y^s$, then $S_{s, 1; 1,1}^{{\bf 1}_r,{\bf 1}_r}[k]=\mathfrak{S}_{s;1}(r,k|1,1)=\mathfrak{S}_{s;1}(r,k)$, and one finds as generalization of the first equation of \eqref{StairEq} that
\begin{equation}\label{SSTirRook}
\mathfrak{S}_{s;1}(n,k)=r_{n-k}^{(s)}(J_n).
\end{equation}
\end{example}
Varvak's interpretation of the numbers $S_{s, h; 1, 1}^{{\bf m}, {\bf n}}[k]$ in terms of $s$-rook numbers was extended by Celeste et al. \cite{CCG2017} to arbitrary $q$ by introducing certain $q$-dependent weights to rook placements. Let us denote a rook placement satisfying the $s$-row creation rule by $\phi$, and the set of all rook placements of $k$ rooks satisfying the $s$-row creation rule on the Ferrers board $B$ by $\mathcal{R}_s(B, k)$. Following \cite{CCG2017}, we define, for $s\neq 0$, the weight $\omega_{s;q}(\phi)$ of a rook placement $\phi \in \mathcal{R}_s(B, k)$ by $\omega_{s;q}(\phi):=h^{u(\phi)}q^{v_s(\phi)}$, where $u(\phi)$ is the number of rooks of the placement, and $v_s(\phi)$ is the number of cells not containing a rook and not lying above a rook. For example, the weight of the rook placement shown in Figure~\ref{FigFile2} is $h^3q^9$. For $s=0$, the parameter $v_0(\phi)$ is the number of cells not containing a rook and not lying above or to the left of a rook (this is the statistic introduced by Garsia and Remmel \cite{AGJR1986}, see also the discussion in \cite{AV2005}). Then the generalized rook number introduced in \cite{CCG2017} is defined by
\begin{equation}\label{RookCeleste}
R_{s, h; q}[B, k] :=\sum\limits_{\phi\in\mathcal{R}_s(B, k)} \omega_{s;q}(\phi)=h^k \sum\limits_{\phi\in\mathcal{R}_s(B, k)} q^{v_s(\phi)}.
\end{equation}
Celeste et al. \cite{CCG2017} showed that one has in this setting 
\begin{equation}\label{qrookfb}
\mathrm{w}_{{\bf m}, {\bf n}}=\sum_{j=0}^{|{\bf{m}}|-m_1} R_{s, h; q}[B_{\mathrm{w}_{{\bf m}, {\bf n}}}, j] \, Y^{|{\bf{n}}|-j(1-s)}X^{|{\bf{m}}|-j}.
\end{equation}
For $q=1$, one has $R_{s, h; 1}[B, j] =h^j |\mathcal{R}_s(B, j)|=h^j r_j^{(s)}(B)$, and \eqref{qrookfb} reduces to \eqref{RookVarvak0}. Comparing \eqref{qrookfb} with~\eqref{pqrook} provides   
$$
S_{s, h; 1, q}^{{\bf m}, {\bf n}}[k]= R_{s, h; q}[B_{\mathrm{w}_{{\bf m}, {\bf n}}}, |{\bf m}|-k].
$$
Similar to \eqref{SSTirRook}, this implies $\mathfrak{S}_{s;1}(n,k|1,q)=R_{s, 1; q}[J_n, n-k]$, for $n\geq 1$. 

\begin{remark}\label{StirFrac}
Recall that the operators $V=X$ and $U=X^{s-1}D$ (with $s\in \mathbb{N}$) act on functions satisfying $UV-VU=V^{s-1}$. Thus, $(X^{s}D)^n=(XX^{s-1}D)^n=(VU)^n$ is related to the generalized Stirling numbers $\mathfrak{S}_{s-1;1}(n,k)$, hence to $(s-1)$-rook numbers of the staircase board $J_n$. Now, one may define ``fractional Stirling numbers'' by considering the expansion of $(X^{\alpha_m}D)^n$, where $\alpha_m = \frac{1}{m}$ with $m \in \mathbb{N}$, see \cite{UK2015} (and \cite{MS2024} for more details on the case $m=2$). Writing $(X^{\alpha_m}D)^n=(XX^{\alpha_m-1}D)^n$, one would expect a connection to $\mathfrak{S}_{\alpha_m-1;1}(n,k)=\mathfrak{S}_{-\frac{m-1}{m};1}(n,k)$. However, if one tries to find a rook interpretation it is not clear how to make sense of the desired ``$-\frac{m-1}{m}$-rook numbers''. Very recently, a combinatorial interpretation was suggested in \cite{DSUK2026}.
\end{remark}
\section{Normal ordering and rook numbers - the $(p,q)$-generalization}\label{spqrook}
\subsection{The $(p,q)$-deformed Weyl algebra ($s=0$)}\label{SectPQRook}
In the case $s=0$, a $(p,q)$-generalization of the weights of rook-placements was introduced for the staircase board $J_n$ in \cite{MWDW1991} and extended to arbitrary Ferrers boards in \cite{JRMW2004}. We follow the style of presentation given in \cite{NLJR2009}. However, we will introduce certain weights for rook placements which are adapted to normal ordering in the $(p,q)$-deformed Weyl algebra $A_{0;h|p,q}$. 

Before starting, note that the new aspect of the $(p,q)$-deformation, for $s=0$,  in~\eqref{pqdefgenWeyl} is that we have a third variable $Z_p$ which ``almost'' commutes with $X$ and $Y$. For the following, we assume that $p\neq 1$ and we abbreviate $Z \equiv Z_p$. As first step to introduce the appropriate weights for rook placements, we introduce a graphical representation of words in the  $(p,q)$-deformed Weyl algebra in terms of an {\it augmented Ferrers board}. Recall how we associated in Section~\ref{SectNORook} to a word $\omega$ in $X$ and $Y$ the Ferrers board $B_{\omega}$ by representing each letter $X$ as a step to the right, and each letter $Y$ as a step up, reading the word from left to right. Now, we have in addition the  letter $Z$ which almost commutes with $X$ and $Y$, i.e., we have 
\begin{equation}\label{ACR}
XZ=p ZX, \hspace{1cm}ZY=p YZ.
\end{equation} 
Let $\omega$ be a word in $X,Y$ and $Z$. We read the word from the left to the right, and, as before, represent each letter $X$ as a step to the right, and each letter $Y$ as a step up. In addition, we represent each power $Z^k$ (for $k\geq 1$) as a circle labelled with $k$ at the corner of the path where it appears. For example, let $\omega = XZ^2XZ^3YZXYXXZ^4Y$, then the associated {\em augmented Ferrers board} is displayed in Figure~\ref{AugmentedBoard} (the path outlining the board is drawn with a heavier weight). 

\begin{figure}[h]
\centering
\begin{tikzpicture}

\draw (0,0.5) -- (0,2);
\draw (0.5,0.65) -- (0.5,2);

\draw[very thick] (1,0.65) -- (1,0.85);
\draw (1,1.15) -- (1,2);   

\draw[very thick] (1.5,1) -- (1.5,1.5);
\draw (1.5,1.5) -- (1.5,2); 

\draw (2,1.5) -- (2,2);

\draw[very thick] (2.5,1.65) -- (2.5,2);  

 
\draw (0,2) -- (2.5,2);

\draw (0,1.5) -- (1.5,1.5);
\draw[very thick] (1.5,1.5) -- (2.35,1.5);

\draw (0,1) -- (0.85,1);
\draw[very thick] (1.15,1) -- (1.5,1); 

\draw[very thick] (0,0.5) -- (0.35,0.5);
\draw[very thick] (0.65,0.5) -- (0.85,0.5);

\draw (0.5,0.5) circle (4pt);
\draw (1,0.5) circle (4pt);

\draw (1,1) circle (4pt);

\draw (2.5,1.5) circle (4pt);

\node at (0.5,0.5) {\footnotesize 2};
\node at (1,0.5) {\footnotesize 3};
\node at (1,1) {\footnotesize 1};
\node at (2.5,1.5) {\footnotesize 4};
\end{tikzpicture}
\caption{Augmented Ferrers board associated to $XZ^2XZ^3YZXYXXZ^4Y$.}\label{AugmentedBoard}
\end{figure}

A word $\omega$ in the letters $X,Y$ and $Z$ is in normal ordered form if the order of the letters is $Y^{l}Z^mX^n$, see Figure~\ref{AugBoardNO}. Recall that the process of normal ordering a word in the conventional Weyl algebra consists of selecting the rightmost corner (i.e., corresponding to a subword $XY$) and either placing a rook (corresponding to $XY \rightsquigarrow h $) or leaving it empty (corresponding to $XY \rightsquigarrow YX $). By iterating over all possibilities of placing the rooks (or not), one obtains the case $s=0$ of \eqref{RookVarvak0}. For the $q$-deformed Weyl algebra the procedure is exactly the same, only after placing the rooks one has to use certain $q$-dependent weights for the rook placements (see Section~\ref{SectNORook}). 

\begin{figure}[h]
\centering
\begin{tikzpicture}
\draw (0,0.5) -- (0,1.35);
\draw (0.15,1.5) -- (2.5,1.5);
\draw (0,1.5) circle (4pt);
\node at (0,1.5) {\footnotesize $m$};

\node at (-0.2,0.75) {\footnotesize $l$};
\node at (1.25,1.75) {\footnotesize $n$};
\end{tikzpicture}
\caption{Augmented Ferrers board associated to the normal ordered word $Y^lZ^mX^n$.}\label{AugBoardNO}
\end{figure}

In the $(p,q)$-deformed Weyl algebra we want to proceed in a similar fashion as in the $q$-deformed variant. However, due to the additional variable $Z$ we represent a word $\omega$ in a first step as an augmented Ferrers board, see Figure~\ref{AugmentedBoard}. Now, consider the right-most corner corresponding to the terminal subword $XZ^4Y$. Clearly, we have to move the letters $Z$ away before we can apply the commutation relation for $X$ and $Y$ (and then either place a rook or not). Since the letter $Z$ almost commutes with $X$ and $Y$, see \eqref{ACR}, moving the letter $Z$ in $\omega$ produces some powers of $p$. Moving a letter $Z$ in $\omega$ corresponds to {\em moving the labelled circle on the path outlined by $\omega$}. In the example of Figure~\ref{AugmentedBoard}, we can move a circle horizontally to the left (i.e., commuting it with $X$, producing according to \eqref{ACR} a factor $p$), or vertically up (i.e., commuting it with $Y$, producing according to \eqref{ACR} also a factor $p$). Thus, all the different possibilities to move the 4 $Z$'s away give the same factor $p^4$. We move all of them to the top, see Figure~\ref{AugmentedBoard2}. Now, the right-most corner is free and we can either place a rook or not.

\begin{figure}[h]
\centering
\begin{tikzpicture}

\draw (0,0.5) -- (0,2);
\draw (0.5,0.65) -- (0.5,2);

\draw (1,0.65) -- (1,0.85);
\draw (1,1.15) -- (1,2);   

\draw (1.5,1) -- (1.5,2); 

\draw (2,1.5) -- (2,2);

\draw (2.5,1.65) -- (2.5,1.85);  

 
\draw (0,2) -- (2.35,2);

\draw (0,1.5) -- (2.35,1.5);

\draw (0,1) -- (0.85,1);
\draw (1.15,1) -- (1.5,1); 

\draw (0,0.5) -- (0.35,0.5);
\draw (0.65,0.5) -- (0.85,0.5);

\draw (0.5,0.5) circle (4pt);
\draw (1,0.5) circle (4pt);

\draw (1,1) circle (4pt);
\draw (2.5,1.5) circle (4pt);
\draw (2.5,2) circle (4pt);

\node at (0.5,0.5) {\footnotesize 2};
\node at (1,0.5) {\footnotesize 3};
\node at (1,1) {\footnotesize 1};
\node at (2.5,1.5) {\footnotesize 3};
\node at (2.5,2) {\footnotesize 1};

\node at (-0.2,1.25) {\footnotesize $p$};

\node at (3.25,1.25) {$\rightsquigarrow$};

\draw (4,0.5) -- (4,2);
\draw (4.5,0.65) -- (4.5,2);

\draw (5,0.65) -- (5,0.85);
\draw (5,1.15) -- (5,2);   

\draw (5.5,1) -- (5.5,2); 

\draw (6,1.5) -- (6,2);

\draw (6.5,1.5) -- (6.5,1.85);  

 
\draw (4,2) -- (6.35,2);

\draw (4,1.5) -- (6.5,1.5);

\draw (4,1) -- (4.85,1);
\draw (5.15,1) -- (5.5,1); 

\draw (4,0.5) -- (4.35,0.5);
\draw (4.65,0.5) -- (4.85,0.5);

\draw (4.5,0.5) circle (4pt);
\draw (5,0.5) circle (4pt);

\draw (5,1) circle (4pt);

\draw (6.5,2) circle (4pt);

\node at (4.5,0.5) {\footnotesize 2};
\node at (5,0.5) {\footnotesize 3};
\node at (5,1) {\footnotesize 1};
\node at (6.5,2) {\footnotesize 4};

\node at (3.8,1.25) {\footnotesize $p^4$};

\end{tikzpicture}
\caption{The augmented Ferrers board from Figure~\ref{AugmentedBoard} after moving one of the right-most circles to the top (left), and after moving all of them to the top (right).}\label{AugmentedBoard2}
\end{figure}

\begin{example}\label{NOEX}
Let us consider a very simple example for normal ordering a word and its representation using augmented Ferrers boards. Let $\omega=(YX)^3=YXYXYX$. Recall that each time we have a corner, i.e., a subword $XY$, we can either (1) commute $XY \rightsquigarrow qYX$, or (2) contract $XY \rightsquigarrow hZ$. We label the augmented Ferrers board by the sequence of operations to generate it. For example, the second board in the second row in Figure~\ref{ABNO} is labelled with $(12)$ since we first commuted and then contracted. 

\begin{figure}[h]
\centering
\begin{tikzpicture}

\draw (0,0.5) -- (0,2);
\draw (0.5,1) -- (0.5,2);
\draw (1,1.5) -- (1,2);
\draw (0,2) -- (1.5,2);
\draw (0,1.5) -- (1,1.5);
\draw (0,1) -- (0.5,1);
\node at (0.75,0.25) {\footnotesize $YXYXYX$};

\node at (2.25,1.25) {$\rightsquigarrow$};

\node at (2.75,1.25) {$q $};
\draw (3,0.5) -- (3,2);
\draw (3.5,1) -- (3.5,2);
\draw (3,2) -- (4.5,2);
\draw (3,1.5) -- (3.5,1.5);
\draw (3,1) -- (3.5,1);

\node at (4.75,2) {$(1)$};
\node at (3.75,0.25) {\footnotesize $qYXY^2X^2$};

\node at (5.25,1.25) {$+$};

\node at (5.75,1.25) {$h$};
\draw (6,1) -- (6,2);
\draw (6.5,1.5) -- (6.5,1.85);
\draw (6,2) -- (6.35,2);
\draw (6.65,2) -- (7,2);
\draw (6,1.5) -- (6.5,1.5);
\node at (7.25,2) {$(2)$};

\draw (6.5,2) circle (4pt);
\node at (6.5,2) {\footnotesize 1};

\node at (6.75,0.25) {\footnotesize $hYXYZX$};


\node at (2.25,-1.75) {$\rightsquigarrow$};

\node at (2.75,-1.75) {$q^2 $};
\draw (3,-2.5) -- (3,-1);
\draw (3.5,-1.5) -- (3.5,-1);
\draw (3,-1) -- (4.5,-1);
\draw (3,-1.5) -- (3.5,-1.5);
\node at (4.9,-1) {$(11)$};

\node at (3.75,-2.75) {\footnotesize $q^2Y^2XYX^2$};

\node at (5.25,-1.75) {$+$};

\node at (5.75,-1.75) {$qh$};
\draw (6,-2) -- (6,-1.65);
\draw (6,-1.35) -- (6,-1);
\draw (6,-1) -- (7,-1);
\node at (7.4,-1) {$(12)$};

\draw (6,-1.5) circle (4pt);
\node at (6,-1.5) {\footnotesize 1};

\node at (6.75,-2.75) {\footnotesize $qhYZYX^2$};

\node at (7.75,-1.75) {$+$};

\node at (8.75,-1.75) {$qh$};
\draw (9,-2) -- (9,-1);
\draw (9,-1) -- (9.35,-1);
\draw (9.65,-1) -- (10,-1);
\node at (10.4,-1) {$(21)$};

\draw (9.5,-1) circle (4pt);
\node at (9.5,-1) {\footnotesize 1};

\node at (9.75,-2.75) {\footnotesize $qhY^2XZX$};

\node at (10.75,-1.75) {$+$};

\node at (11.75,-1.75) {$h^2$};
\draw (12,-1.5) -- (12,-1.15);
\draw (12.15,-1) -- (12.5,-1);
\node at (12.9,-1) {$(22)$};

\draw (12,-1) circle (4pt);
\node at (12,-1) {\footnotesize 2};

\node at (12.75,-2.75) {\footnotesize $h^2YZ^2X$};


\node at (2.25,-4.75) {$\rightsquigarrow$};

\node at (2.75,-4.75) {$q^3 $};
\draw (3,-5.5) -- (3,-4);
\draw (3,-4) -- (4.5,-4);
\node at (5,-4) {$(111)$};

\node at (3.75,-5.75) {\footnotesize $q^3Y^3X^3$};

\node at (5.25,-4.75) {$+$};

\node at (5.65,-4.75) {$q^2h$};
\draw (6,-5) -- (6,-4.15);
\draw (6.15,-4) -- (7,-4);
\node at (7.5,-4) {$(112)$};

\draw (6,-4) circle (4pt);
\node at (6,-4) {\footnotesize 1};

\node at (6.75,-5.75) {\footnotesize $q^2h Y^2ZX^2$};

\node at (10,-4.75) {$+ (12) + (21) + (22)$};

\end{tikzpicture}
\caption{Normal ordering the word $YXYXYX$ using augmented Ferrers boards.}\label{ABNO}
\end{figure}
In Figure~\ref{ABNO}, in the third row the nearly final result is shown. Commuting the letter $Z$ in the boards $(12)$ and $(21)$ to its middle position will give for each board an additional factor of $p$. Thus,
\begin{equation}\label{ABNOEQ}
(YX)^3=q^3Y^3X^3 + \left(q^2 + pq + pq\right) h Y^2ZX^2  +h^2YZ^2X.´
\end{equation}
Note that each resulting final augmented board can be related in a one-to-one way to a Ferrers board where some rooks are placed (identification by the sequence of commutations and contractions).  In Figure~\ref{ABNO2}, the possible rook placements on the Ferrers board for $(YX)^3$ can be seen, together with a reference to the augmented Ferrers board and the corresponding weight, the rook placement must have to recover the normal ordering result \eqref{ABNOEQ}.

\begin{figure}[h]
\centering
\begin{tikzpicture}

\draw (0,0.5) -- (0,2);
\draw (0.5,1) -- (0.5,2);
\draw (1,1.5) -- (1,2);
\draw (0,2) -- (1.5,2);
\draw (0,1.5) -- (1,1.5);
\draw (0,1) -- (0.5,1);
\node at (0.75,0.25) {\footnotesize $(111)\leftrightarrow q^3$};

\draw (3,0.5) -- (3,2);
\draw (3.5,1) -- (3.5,2);
\draw (4,1.5) -- (4,2);
\draw (3,2) -- (4.5,2);
\draw (3,1.5) -- (4,1.5);
\draw (3,1) -- (3.5,1);
\filldraw (3.75,1.75) circle (1.5pt);
\node at (3.75,0.25) {\footnotesize $(21)\leftrightarrow pqh$};

\draw (6,0.5) -- (6,2);
\draw (6.5,1) -- (6.5,2);
\draw (7,1.5) -- (7,2);
\draw (6,2) -- (7.5,2);
\draw (6,1.5) -- (7,1.5);
\draw (6,1) -- (6.5,1);
\filldraw (6.25,1.25) circle (1.5pt);
\node at (6.75,0.25) {\footnotesize $(12)\leftrightarrow pqh$};

\draw (9,0.5) -- (9,2);
\draw (9.5,1) -- (9.5,2);
\draw (10,1.5) -- (10,2);
\draw (9,2) -- (10.5,2);
\draw (9,1.5) -- (10,1.5);
\draw (9,1) -- (9.5,1);
\filldraw (9.25,1.75) circle (1.5pt);
\node at (9.75,0.25) {\footnotesize $(112)\leftrightarrow q^2h $};

\draw (12,0.5) -- (12,2);
\draw (12.5,1) -- (12.5,2);
\draw (13,1.5) -- (13,2);
\draw (12,2) -- (13.5,2);
\draw (12,1.5) -- (13,1.5);
\draw (12,1) -- (12.5,1);
\filldraw (12.25,1.25) circle (1.5pt);
\filldraw (12.75,1.75) circle (1.5pt);
\node at (12.75,0.25) {\footnotesize $(22)\leftrightarrow h^2$};

\end{tikzpicture}
\caption{Rook placements on the Ferrers board of $YXYXYX$.}\label{ABNO2}
\end{figure}
\end{example}
The above example shows that a contraction (i.e., rook placement on the Ferrers board) yields a letter $Z$ and, hence, a labelled circle on the augmented Ferrers board. This has to be pushed to the origin -- like in the boards (22) or (112) --, giving some powers of $p$. To determine the corresponding power of $p$, we first observe that when placing the rook, all boxes lying above or to the left of this rook will give a factor of $p$ -- but with two exceptions: 
\begin{itemize}
\item A box lying above the rook and to the left of another rook does not contribute a factor of $p$ since this row is canceled by the other rook to the right.
\item A box lying to the left of the rook and in the column of another rook does not contribute a factor of $p$ since this column is cancelled by the other rook to the left.    
\end{itemize} 
We now formalize the above observations and introduce a $(p,q)$-weight for rook placements which generalizes the one for $p=1$ from the preceding section (for $s=0$). Let $\phi$ be a placement of $k$ non-attacking rooks on the Ferrrers board $B$. We partition the boxes of $B$ into 4 disjoint classes:
\begin{enumerate}
\item A {\em rook box} is a box of $B$ occupied by a rook.
\item A {\em $p$-box} is a box which is not a rook box and which is lying above or to the left of a rook, and
\begin{itemize}
\item it is not lying to the left of another rook, and
\item it is not lying in the column of another rook.
\end{itemize}
\item A {\em cancelled box} is a box located above or to the left of a rook which is not a $p$-box.
\item All remaining boxes of $B$ are {\em $q$-boxes}.
\end{enumerate} 
\begin{definition}
The {\em $(p,q)$-weight} of a placement $\phi$ of $k$ non-attacking rooks on the Ferrers board $B$ is defined to be
\begin{equation}\label{PQWeight}
\omega_{p,q}(B;\phi) :=h^k p^{\# p-boxes}q^{\# q-boxes}.
\end{equation}
\end{definition}
It is easy to see that this definition gives the correct weights for the rook placements displayed in Figure~\ref{ABNO2}. Let us give a more complex example. In the following figure, we mark a rook box by a rook, a $p$-box (respectively, $q$-box) with a $p$ (respectively, $q$), and the cancelled boxes with an $x$. In Figure~\ref{FigFilePQ} a 6-rook placement on a board can be seen. There are in total 74 boxes: 6 rook-boxes, 16 $p$-boxes, 15 cancelled boxes, and 37 $q$-boxes. Thus, the $(p,q)$-weight of this placement is given by $h^6 p^{16}q^{37}$.

\begin{figure}[h]
\centering
\begin{tikzpicture}

\draw (0,6.5) -- (6,6.5);
\draw (0,6) -- (6,6);
\draw (0,5.5) -- (5,5.5);
\draw (0,5) -- (5,5);
\draw (0,4.5) -- (4,4.5); 
\draw (0,4) -- (4,4); 
\draw (0,3.5) -- (3.5,3.5); 
\draw (0,3) -- (2.5,3); 
\draw (0,2.5) -- (2.5,2.5); 
\draw (0,2) -- (2.5,2); 
\draw (0,1.5) -- (1,1.5); 
\draw (0,1) -- (1,1);


\draw (0,1) -- (0,6.5);
\draw (0.5,1) -- (0.5,6.5);
\draw (1,1) -- (1,6.5);
\draw (1.5,2) -- (1.5,6.5);
\draw (2,2) -- (2,6.5);
\draw (2.5,2) -- (2.5,6.5);
\draw (3,3.5) -- (3,6.5);
\draw (3.5,3.5) -- (3.5,6.5);
\draw (4,4) -- (4,6.5);
\draw (4.5,5) -- (4.5,6.5);
\draw (5,5) -- (5,6.5);
\draw (5.5,6) -- (5.5,6.5);
\draw (6,6) -- (6,6.5);

\node at (0.25,6.25) {x};
\node at (0.75,6.25) {x};
\node at (1.25,6.25) {x};
\node at (1.75,6.25) {$p$};
\node at (2.25,6.25) {x};
\node at (2.75,6.25) {$p$};
\node at (3.25,6.25) {x};
\node at (3.75,6.25) {$p$};
\node at (4.25,6.25) {$p$};
\node at (4.75,6.25) {$p$};
\filldraw (5.25,6.25) circle (3pt);
\node at (5.75,6.25) {$q$};

\node at (0.25,5.75) {x};
\node at (0.75,5.75) {x};
\node at (1.25,5.75) {x};
\node at (1.75,5.75) {$p$};
\node at (2.25,5.75) {x};
\node at (2.75,5.75) {$p$};
\filldraw (3.25,5.75) circle (3pt);
\node at (3.75,5.75) {$q$};
\node at (4.25,5.75) {$q$};
\node at (4.75,5.75) {$q$};

\node at (0.25,5.25) {$p$};
\node at (0.75,5.25) {$p$};
\node at (1.25,5.25) {$p$};
\node at (1.75,5.25) {$q$};
\node at (2.25,5.25) {$p$};
\node at (2.75,5.25) {$q$};
\node at (3.25,5.25) {$q$};
\node at (3.75,5.25) {$q$};
\node at (4.25,5.25) {$q$};
\node at (4.75,5.25) {$q$};

\filldraw (0.25,4.75) circle (3pt);
\node at (0.75,4.75) {$p$};
\node at (1.25,4.75) {$p$};
\node at (1.75,4.75) {$q$};
\node at (2.25,4.75) {$p$};
\node at (2.75,4.75) {$q$};
\node at (3.25,4.75) {$q$};
\node at (3.75,4.75) {$q$};

\node at (0.25,4.25) {x};
\node at (0.75,4.25) {x};
\node at (1.25,4.25) {x};
\node at (1.75,4.25) {$p$};
\filldraw (2.25,4.25) circle (3pt);
\node at (2.75,4.25) {$q$};
\node at (3.25,4.25) {$q$};
\node at (3.75,4.25) {$q$};

\node at (0.25,3.75) {x};
\node at (0.75,3.75) {x};
\filldraw (1.25,3.75) circle (3pt);
\node at (1.75,3.75) {$q$};
\node at (2.25,3.75) {$q$};
\node at (2.75,3.75) {$q$};
\node at (3.25,3.75) {$q$};

\node at (0.25,3.25) {$q$};
\node at (0.75,3.25) {$p$};
\node at (1.25,3.25) {$q$};
\node at (1.75,3.25) {$q$};
\node at (2.25,3.25) {$q$};

\node at (0.25,2.75) {x};
\filldraw (0.75,2.75) circle (3pt);
\node at (1.25,2.75) {$q$};
\node at (1.75,2.75) {$q$};
\node at (2.25,2.75) {$q$};

\node at (0.25,2.25) {$q$};
\node at (0.75,2.25) {$q$};
\node at (1.25,2.25) {$q$};
\node at (1.75,2.25) {$q$};
\node at (2.25,2.25) {$q$};

\node at (0.25,1.75) {$q$};
\node at (0.75,1.75) {$q$};

\node at (0.25,1.25) {$q$};
\node at (0.75,1.25) {$q$};

\end{tikzpicture}
\caption{A Ferrers board with a $6$-rook placement and marking of the boxes according to their class.}\label{FigFilePQ}
\end{figure}

Now, we can define the {\em $(p,q)$-deformed rook numbers} as follows.
\begin{definition}
Let $B$ be a Ferrers board and let $\mathcal{R}(B, k)$ be the set of placements of $k$ non-attacking rooks on $B$. Then the {\em $(p,q)$-deformed rook numbers} are defined by
\begin{equation}\label{PQRookWe}
\mathscr{R}_{h; p,q}[B, k] :=\sum\limits_{\phi\in\mathcal{R}(B, k)} \omega_{p,q}(B;\phi)=h^k \sum\limits_{\phi\in\mathcal{R}(B, k)} p^{\# p-boxes}q^{\# q-boxes}.
\end{equation}
\end{definition}
Note that the definition of a $q$-box in this case means that it is neither a rook-box nor is it located to the left or above a rook-box. Thus, the number of $q$-boxes in the weight $\omega_{p,q}(B;\phi)$ defined in \eqref{PQWeight} is exactly the parameter $v_0(\phi)$ of the preceding section. Thus, for $p=1$, the weight defined here reduces to the one of the preceding section, $\omega_{1,q}(B;\phi)=\omega_{0;q}(\phi)$. Consequently, the $(p,q)$-deformed rook numbers defined here reduce for $p=1$ to the ones considered in \eqref{RookCeleste} (for $s=0$),
$$
\mathscr{R}_{h;1,q}[B, k]=R_{0,h;q}[B, k].
$$
In general, we have the following result.
\begin{theorem}\label{TheoWordNO}
Let $\mathrm{w}_{{\bf m}, {\bf n}}=Y^{n_r}X^{m_r}\cdots Y^{n_1}X^{m_1}$ be a word in the letters $X,Y$ satisfying the commutation relations \eqref{pqdefgenWeyl}, for $s=0$, of the $(p,q)$-deformed Weyl algebra $A_{0;h|p,q}$. Then one has the normal ordering result
\begin{equation}\label{pqrookNO}
\mathrm{w}_{{\bf m}, {\bf n}}=\sum_{k=0}^{|{\bf m}| -m_1} \mathscr{R}_{h; p,q}[B_{\mathrm{w}_{{\bf m}, {\bf n}}}, k] \, Y^{|{\bf n}|-k}Z_p^{k}X^{|{\bf m}|-k},
\end{equation} 
where $B_{\mathrm{w}_{{\bf m}, {\bf n}}}$ is the Ferrers board associated to $\mathrm{w}_{{\bf m}, {\bf n}}$.
\end{theorem}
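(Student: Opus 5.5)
We argue by induction on the number of corners of the word, that is, the number of subwords $XY$, processing the rightmost corner first. This mirrors the normal ordering process described above via augmented Ferrers boards and the procedure in Example~\ref{NOEX}.

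We work with a slightly more general object: a word $\omega$ in $X$, $Y$, $Z$ together with its augmented Ferrers board. The statement to prove for such words is that $\omega$ equals $p^{c(\omega)}\sum_k \mathscr{R}_{h;p,q}[B_\omega,k]\,Y^{\cdot}Z^{\cdot+k}X^{\cdot}$. Here $c(\omega)$ is the number of $(Z,\text{letter})$ transpositions needed to push each existing circle to the origin, i.e.\ the number of $X$'s to its left plus the number of $Y$'s to its right.

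\textbf{Base case.} The board has no cells, so the word is already $Y^{l}Z^{m}X^{n}$ up to moving circles, and only the empty placement occurs.

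\textbf{Inductive step.} Let $B$ be the board and let $c$ be the cell at the rightmost corner, i.e.\ the rightmost subword $XY$. First move any circle sitting at that corner up, as in Figure~\ref{AugmentedBoard2}; this contributes a power of $p$ that is absorbed in $c(\omega)$. Then apply \eqref{pqdefgenWeyl}:
\[
XY = qYX + hZ .
\]

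In the first term the corner cell is removed and the board becomes $B\setminus\{c\}$. We must check the weight: in a placement on $B$ with no rook on $c$, the cell $c$ is a $q$-box. Indeed, $c$ is the last cell of the last column and the bottom cell of its row, so it cannot lie above or to the left of any rook placed elsewhere. Hence the factor $q$ accounts exactly for $c$.

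In the second term the column and the row of $c$ are deleted, producing a smaller board $B'$ together with a new circle at the corner. By induction, $\mathscr{R}[B',k-1]$ times $p^{(\text{distance of the new circle to the origin})}$ gives the placements on $B$ with a rook on $c$. Everything reduces to the following weight identity: for $\phi\ni c$ with restriction $\phi'$ on $B'$,
\[
\#p\text{-boxes}_B(\phi)=\#p\text{-boxes}_{B'}(\phi')+\bigl(\text{number of cells above or to the left of } c \text{ not in a row or column cancelled by } \phi'\bigr),
\]
and moreover the $q$-boxes of $B$ coincide with those of $B'$.

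This bookkeeping is the main obstacle. The difficulty is that placing later rooks (further left) reclassifies cells in the row and column of $c$ as cancelled. The $p$-exponent produced by transporting the new circle is computed on the board \emph{after} subsequent contractions have removed rows and columns. In the augmented-board picture this is automatic: circles are pushed to the origin only at the end, along the final reduced path, so the exponent counts the $X$'s to the left and $Y$'s to the right remaining in the final word. These correspond precisely to cells in the row or column of $c$ whose own column or row was not later consumed by a rook, which is exactly the $p$-box condition. I would therefore push all circles only at the very end, i.e.\ strengthen the induction hypothesis to carry circles positioned anywhere, and verify via the two exceptional bullet rules that the $p$-exponents agree.

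Finally, the degree bookkeeping is routine. Each rook lowers the number of $X$'s by one and the number of $Y$'s by one, and adds one $Z$, which gives $Y^{|{\bf n}|-k}Z^{k}X^{|{\bf m}|-k}$. Since the first $m_1$ $X$'s never meet a $Y$, we get $k\le|{\bf m}|-m_1$.
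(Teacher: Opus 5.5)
Your route is the one the paper sketches before defining $p$-boxes: process the rightmost corner, where commuting removes the corner cell with a factor $q$ (correctly a $q$-box) and contracting deletes its row and column and creates a circle. The commuting step and the degree bookkeeping are fine. The contraction step carries all the content, and you both defer it and state it incorrectly, in two ways.

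\textbf{First error: the strengthened hypothesis.} The $k$-independent factor $p^{c(\omega)}$ is false as soon as circles are present. For example,
$ZXY=Z(qYX+hZ)=pq\,YZX+h\,Z^2$, whereas your formula predicts $p\,h\,Z^2$ for the $k=1$ term. Each contraction lowers the transport count of every existing circle by exactly one, wherever the circle sits relative to the contracted pair. If it sits between the contracted $X$ and $Y$, it must first be moved past one of them, costing one $p$, and afterwards it has to pass neither. So the correct factor is $p^{c(\omega)-uk}$, with $u$ the number of circles, consistent with Proposition~\ref{PropPQWeylNO}.

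\textbf{Second error: the exponent of the new circle.} You describe it as the $X$'s to its left and $Y$'s to its right \emph{remaining in the final word}, i.e.\ cells in the row or column of $c$ whose row or column is not later consumed. That is not the $p$-box condition, and it gives wrong weights. A cell above a rook is cancelled only by a rook to its \emph{right}, i.e.\ one placed \emph{earlier}. Since $c$ lies in the rightmost nonempty column, every cell above $c$ is a $p$-box, even when its row is consumed later by a rook further left. Algebraically, before that later contraction the circle sits in a subword $XZY$ and must be moved, which produces the $p$.

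Concretely, on $J_4$ put rooks in the bottom cell of the second column and in the top cell of the first column. The paper's weight is $h^2pq^2$. The algebra agrees: in $YXYXYXYX$ the branch commute, contract, commute passes through $q^2h\,YYXZYXX$, and the final contraction then forces $ZY=pYZ$, yielding $pq^2h^2\,Y^2Z^2X^2$. Your count gives $h^2q^2$.

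\textbf{The fix.} You do not need to carry circles through the induction at all. All relations are homogeneous for $\deg_Y-\deg_X$, with $Z$ of degree $0$. Hence, writing the word as $uXYv$ with $v=Y^aX^b$, the contraction term is exactly $uZv=p^{\#Y(v)-\#X(v)}\,uv\,Z$.

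Apply the induction hypothesis to the $Z$-free word $uv$, whose board is $B'$. Then use $X^{|{\bf m}|-1-k'}Z=p^{|{\bf m}|-1-k'}ZX^{|{\bf m}|-1-k'}$. The total exponent is $\#Y(v)+\#X(u)-k'$, and each term has a combinatorial meaning:
\begin{itemize}
\item $\#Y(v)=a$ is the number of cells above $c$.
\item The $k'$ rooks of $\phi'$ lie in distinct columns belonging to $X$'s of $u$ (the columns of the $X$'s in $v$ are empty in $B'$), and each such column meets the row of $c$. So $\#X(u)-k'$ counts the cells to the left of $c$ in rook-free columns.
\item Cells outside the row and column of $c$ have the same class in $B$ and in $B'$.
\end{itemize}
So the identity you actually need is
\[
\#\{p\text{-boxes of }\phi\text{ in }B\}=\#\{p\text{-boxes of }\phi'\text{ in }B'\}+\#\{\text{cells above }c\}+\#\{\text{cells left of }c\text{ in columns with no rook of }\phi'\},
\]
with the $q$-boxes unchanged, since no cell in the row or column of $c$ is a $q$-box. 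With this identity the induction closes.
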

Letting $\ell = |{\bf m}|-k$ in \eqref{pqrookNO}, a comparison with \eqref{pqrook} shows that 
\begin{equation}\label{Identification}
S_{0, h; p, q}^{{\bf m}, {\bf n}}[\ell]=\mathscr{R}_{h; p,q}[B_{\mathrm{w}_{{\bf m}, {\bf n}}}, |{\bf m}|-\ell] .
\end{equation}

\begin{corollary}\label{PQStirlingRook}
Let us consider the word $\omega=(YX)^n$, for $n \in \mathbb{N}$. Since $B_{(YX)^n}=J_{n}$, \eqref{pqrookNO} gives
\begin{equation}\label{Equ:PQSTirlingRook}
	(YX)^n=\sum_{\ell=0}^{n} \mathscr{R}_{h; p,q}[J_{n}, n-\ell ] \, Y^{\ell }Z_p^{n-\ell}X^{\ell}.
\end{equation}
Setting $s=0$ in \eqref{StirGen} yields
 \begin{equation}\label{StirGenS0}
	(YX)^n=\sum_{k=0}^n \mathfrak{S}_{0;h}(n,k|p,q) \, Y^{k}Z_p^{n-k} X^{k}.
\end{equation}
Comparing \eqref{Equ:PQSTirlingRook} with \eqref{StirGenS0}, we find $\mathfrak{S}_{0;h}(n,\ell|p,q)=\mathscr{R}_{h; p,q}[J_{n}, n-\ell ]$, for $n \geq 1$. According to~\cite[Proposition 3]{LO2020}, one can identify $\mathfrak{S}_{0;1}(n,\ell|p,q)=S_{p,q}(n,\ell)$ with the $(p,q)$-Stirling numbers of the second kind discussed in \cite{LO2020}, where $S_{p,q}(0,0)=0$ and 
\begin{equation}\label{pqStirlingRecu}
	S_{p,q}(n,k)=p^{n-k}q^{k-1}S_{p,q}(n-1,k-1)+[k]_{p,q}S_{p,q}(n-1,k),
\end{equation}
follows also from \eqref{Stirlingrecu} for $s=0$ and $h=1$. Hence 
\begin{equation}\label{PQStirlingRookEq}
	S_{p,q}(n,\ell)=\mathscr{R}_{1; p,q}[J_{n}, n-\ell ].
\end{equation} 
This gives the $(p,q)$-deformed Stirling numbers $S_{p,q}(n,\ell)$ -- defined in terms of normal ordering -- a combinatorial interpretation in terms of $(p,q)$-deformed rook numbers. 
\end{corollary}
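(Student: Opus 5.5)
The plan is to obtain the corollary from Theorem~\ref{TheoWordNO} by specialization, and then to identify the resulting coefficients by uniqueness of the normal ordered form. First, take ${\bf m}={\bf n}={\bf 1}_n$, so that $\mathrm{w}_{{\bf m},{\bf n}}=(YX)^n$ with $|{\bf m}|=|{\bf n}|=n$ and $m_1=1$. Following the lattice path of $YXYX\cdots YX$ as in Section~\ref{SectNORook} shows that the outlined board is the staircase $J_n$, with columns of heights $n-1,\dots,1$. Substituting into \eqref{pqrookNO} and reindexing by $\ell=n-k$ gives \eqref{Equ:PQSTirlingRook}. The summation range $k\le n-1$ means $\ell\ge1$, so the $\ell=0$ term vanishes; this is consistent with $\mathscr{R}_{h;p,q}[J_n,n]=0$, since $J_n$ has only $n-1$ columns.

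Second, compare \eqref{Equ:PQSTirlingRook} with \eqref{StirGenS0}, the case $s=0$ of \eqref{StirGen}. The monomials $Y^{\ell}Z_p^{n-\ell}X^{\ell}$ are distinct for distinct $\ell$. The words $Y^aZ_p^bX^c$ are linearly independent in $A_{0;h|p,q}$; this is the PBW-type fact that makes the normal ordering coefficients in Definition~\ref{rooknum} and \eqref{StirGen} well defined, and I would take it from \cite{TLM1}. Hence the coefficients agree termwise, which gives $\mathfrak{S}_{0;h}(n,\ell|p,q)=\mathscr{R}_{h;p,q}[J_n,n-\ell]$ for $n\ge1$. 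Equivalently, this is \eqref{Identification} with ${\bf m}={\bf n}={\bf 1}_n$, combined with $S^{{\bf 1}_n,{\bf 1}_n}_{0,h;p,q}[\ell]=\mathfrak{S}_{0;h}(n,\ell|p,q)$ from Section~\ref{sprelim}.

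Third, set $h=1$. The identification $\mathfrak{S}_{0;1}(n,\ell|p,q)=S_{p,q}(n,\ell)$ is quoted from \cite[Proposition 3]{LO2020}. To make it self-contained, I would check that the specialization of \eqref{Stirlingrecu} at $s=0$, $h=1$ is exactly \eqref{pqStirlingRecu}: the factor $p^{n-k+1}q^{s(n-k+1)+k-1}$ becomes $p^{n-k+1}q^{k-1}$, and $[s(n-k)+k]_{p,q}$ becomes $[k]_{p,q}$, which after shifting $n+1\mapsto n$ matches. Both families also satisfy the same boundary values at $n=1$ (the word $YX$ is already normal ordered), so induction on $n$ gives equality for $n\ge1$. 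Combining this with the previous step yields \eqref{PQStirlingRookEq}.

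I expect the only delicate point to be the linear independence of normal ordered monomials, which is what justifies coefficient comparison. If this is not available from \cite{TLM1}, I would supply it using a faithful representation. The natural choice is to let $X=D_{p,q}$, let $Y$ be multiplication by $x$, and let $Z_p$ be the Fibonacci-type operator $f(x)\mapsto f(px)$, as mentioned in the introduction. In that representation, $Y^aZ_p^bX^c$ acts on monomials with distinct leading behaviour in $(a,b,c)$ for generic $p,q$, which separates them.

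A second, independent route would be to verify the rook recurrence directly. One would decompose rook placements on $J_{n+1}$ according to the column that $J_n$ lacks and track how $p$-boxes, $q$-boxes and cancelled boxes change. However, the weight bookkeeping there is exactly what Theorem~\ref{TheoWordNO} already encodes, so I would not take this route.
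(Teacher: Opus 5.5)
Your proposal is correct and is essentially the paper's own argument. You specialize Theorem~\ref{TheoWordNO} to ${\bf m}={\bf n}={\bf 1}_n$, so the board is $J_n$ and reindexing by $\ell=n-k$ leaves a vanishing $\ell=0$ term. You then compare coefficients with \eqref{StirGenS0} via uniqueness of the normal ordered form, which the paper uses implicitly; for $s=0$ and $p\neq 1$ it holds because $A_{0;h|p,q}$ is an iterated Ore extension with basis $Y^aZ_p^bX^c$. Finally you import $\mathfrak{S}_{0;1}(n,\ell|p,q)=S_{p,q}(n,\ell)$ from \cite{LO2020}.

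In your optional self-contained check of that last step, the printed initial value $S_{p,q}(0,0)=0$ cannot be taken literally: together with \eqref{pqStirlingRecu} it would force every $S_{p,q}(n,k)$ to vanish (compare $T_{p,q}(0,0)=1$ in Proposition~\ref{PQPropStirRook}). So your boundary values at $n=1$ have to come from the actual normalization in \cite{LO2020}, not from the statement.
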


What about considering the letter $Z_p$ on the left-hand side? Since $Z_p$ almost commutes with $X$ and $Y$ we can pull all letters $Z_p$ to the right (or, alternatively, to the left), yielding some powers of $p$ and something of the form $\omega^{\ast} Z_p^n$, where $\omega^{\ast}$ is a word in $X$ and $Y$ only. This can be normal ordered and then the factor $Z_p^n$ be brought to its correct position. The following argument until \eqref{intermediate} is exactly the same as in \cite{TLM1}, reproduced here to be self-contained. Let us consider this in more detail and let 
\begin{equation}\label{PQWeylWord}
\mathrm{w}_{{\bf m},{\bf n},{\bf u}}=Z_p^{u_{2r}}Y^{n_r}Z_p^{u_{2r-1}}X^{m_r}\cdots Z_p^{u_4}Y^{n_2}Z_p^{u_3}X^{m_2}\cdot Z_p^{u_2}Y^{n_1}Z_p^{u_1}X^{m_1}\cdot Z_p^{u_0}
\end{equation}
be an arbitrary word in $X,Y$ and $Z_p$. Except for the right-most factor $Z_p^{u_0}$ we can group the factors as $r$ ``blocks'' of the form $Z_p^{u_{2k}}Y^{n_k}Z_p^{u_{2k-1}}X^{m_k}$, where $k=1,\ldots,r$. Commuting $Z_p^{u_{2k-1}}$ to the right yields a factor $p^{u_{2k-1}(\sum_{j=1}^{k-1}n_j-\sum_{j=1}^{k}m_j)}$, whereas commuting $Z_p^{u_{2k}}$ to the right yields a factor $p^{u_{2k}(\sum_{j=1}^{k}n_j-\sum_{j=1}^{k}m_j)}$. Together, we get as exponent 
$$
u_{2k-1}\left(\sum_{j=1}^{\lfloor \frac{2k-1}{2} \rfloor} n_j-\sum_{j=1}^{\lfloor \frac{2k}{2} \rfloor}m_j\right)+u_{2k}\left(\sum_{j=1}^{\lfloor \frac{2k}{2} \rfloor}n_j-\sum_{j=1}^{\lfloor \frac{2k+1}{2} \rfloor}m_j)\right).
$$
Thus, pulling all letters $Z_p$ of $\mathrm{w}_{{\bf m},{\bf n},{\bf u}}$ to the right, we get a factor of $p^{\mathscr{L}({\bf m},{\bf n},{\bf u})}$, where
\begin{equation}\label{PQExponent}
\mathscr{L}({\bf m},{\bf n},{\bf u}):=\sum_{\ell =1}^{2r} u_{\ell }\left(\sum_{j=1}^{\lfloor \frac{\ell }{2} \rfloor} n_j-\sum_{j=1}^{\lfloor \frac{\ell +1}{2} \rfloor}m_j\right).
\end{equation}
Denoting by $\mathrm{w}_{{\bf m},{\bf n},{\bf u}}^{\ast}$ the {\it reduced word} of $\mathrm{w}_{{\bf m},{\bf n},{\bf u}}$ where all letters $Z_p$ are deleted from $\mathrm{w}_{{\bf m},{\bf n},{\bf u}}$, we can write (using  $|{\bf u}|=u_{2r}+u_{2r-1}+\cdots + u_1+u_0$)
\begin{equation}\label{intermediate}
\mathrm{w}_{{\bf m},{\bf n},{\bf u}}=p^{\mathscr{L}({\bf m},{\bf n},{\bf u})}\mathrm{w}_{{\bf m},{\bf n},{\bf u}}^{\ast} Z_p^{|{\bf u}|}.
\end{equation}
Now, we can use \eqref{pqrookNO} for $\mathrm{w}_{{\bf m},{\bf n},{\bf u}}^{\ast}$ to find 
$$
\mathrm{w}_{{\bf m},{\bf n},{\bf u}}=p^{\mathscr{L}({\bf m},{\bf n},{\bf u})}\sum_{k=0}^{|{\bf m}|-m_1} \mathscr{R}_{h; p,q}[B_{\mathrm{w}_{{\bf m},{\bf n},{\bf u}}^{\ast}}, k] \, Y^{|{\bf n}|-k}Z_p^{k}X^{|{\bf m}|-k} Z_p^{|{\bf u}|},
$$
where $B_{\mathrm{w}_{{\bf m},{\bf n},{\bf u}}^{\ast}}$ is the Ferrers board associated to $\mathrm{w}_{{\bf m},{\bf n},{\bf u}}^{\ast}$.
Thus, we have shown the following proposition.
\begin{proposition}\label{PropPQWeylNO}
Let $\mathrm{w}_{{\bf m},{\bf n},{\bf u}}$ be an arbitrary word in the letters $X,Y$ and $Z_p$ satisfying the commutation relations \eqref{pqdefgenWeyl}, for $s=0$, of the $(p,q)$-deformed Weyl algebra $A_{0;h|p,q}$. Then one has the normal ordering result
\begin{equation}\label{pqrookNOG}
\mathrm{w}_{{\bf m},{\bf n},{\bf u}}=\sum_{k=0}^{|{\bf m}|-m_1} p^{|{\bf u}|(|{\bf m}|-k)+\mathscr{L}({\bf m},{\bf n},{\bf u})} \mathscr{R}_{h; p,q}[B_{\mathrm{w}_{{\bf m},{\bf n},{\bf u}}^{\ast}}, k] \, Y^{|{\bf n}|-k}Z_p^{|{\bf u}|+k}X^{|{\bf m}|-k},
\end{equation}
where $\mathrm{w}_{{\bf m},{\bf n},{\bf u}}^{\ast}$ denotes the reduced word of $\mathrm{w}_{{\bf m},{\bf n},{\bf u}}$ and $\mathscr{L}({\bf m},{\bf n},{\bf u})$ is defined in \eqref{PQExponent}.
\end{proposition}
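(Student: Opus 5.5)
The plan is to build on the computation the paper has just carried out. Identity \eqref{intermediate}, $\mathrm{w}_{{\bf m},{\bf n},{\bf u}}=p^{\mathscr{L}({\bf m},{\bf n},{\bf u})}\mathrm{w}_{{\bf m},{\bf n},{\bf u}}^{\ast} Z_p^{|{\bf u}|}$, is already available. It comes from pulling each block $Z_p^{u_\ell}$ to the far right using \eqref{ACR}. Each crossing of a $Y$ (from the left, $Z_pY=pYZ_p$ read as $Z_pY\to pYZ_p$) contributes a factor $p$. Each crossing of an $X$ (moving $Z_p$ right past $X$, i.e.\ $Z_pX=p^{-1}XZ_p$) contributes $p^{-1}$.

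I would first re-verify this exponent bookkeeping briefly. The $Z_p^{u_\ell}$ sitting in position $\ell$ has to its right exactly $\sum_{j\le\lfloor \ell/2\rfloor}n_j$ letters $Y$ and $\sum_{j\le\lfloor(\ell+1)/2\rfloor}m_j$ letters $X$. This gives precisely \eqref{PQExponent}. The final factor $Z_p^{u_0}$ moves nothing, consistent with the sum starting at $\ell=1$.

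Next I apply Theorem~\ref{TheoWordNO} to the reduced word $\mathrm{w}^{\ast}_{{\bf m},{\bf n},{\bf u}}$. This word has the form $\mathrm{w}_{{\bf m},{\bf n}}$ with the same vectors, so its board is $B_{\mathrm{w}^{\ast}}$. This produces the displayed sum with terms $\mathscr{R}_{h;p,q}[B_{\mathrm{w}^\ast},k]\,Y^{|{\bf n}|-k}Z_p^kX^{|{\bf m}|-k}Z_p^{|{\bf u}|}$, multiplied by $p^{\mathscr{L}}$.

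The only remaining step is to move $Z_p^{|{\bf u}|}$ leftward past $X^{|{\bf m}|-k}$ in each term. From $XZ_p=pZ_pX$ we get $X^aZ_p^b=p^{ab}Z_p^bX^a$. With $a=|{\bf m}|-k$ and $b=|{\bf u}|$, this produces the factor $p^{|{\bf u}|(|{\bf m}|-k)}$ and merges the powers into $Z_p^{|{\bf u}|+k}$, yielding \eqref{pqrookNOG}.

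The main point of care is the sign convention in the $p$-exponents. One must consistently decide that moving $Z_p$ rightward past $Y$ gives $p^{+1}$ and past $X$ gives $p^{-1}$, so that \eqref{PQExponent} and the final factor $p^{+|{\bf u}|(|{\bf m}|-k)}$ come out with the stated signs. Beyond this, there is no real obstacle, since the combinatorial content is entirely supplied by Theorem~\ref{TheoWordNO}.
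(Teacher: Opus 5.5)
Your proposal is correct and is essentially the paper's own argument. You pull every block $Z_p^{u_\ell}$ to the right to obtain \eqref{intermediate}; your count of the $n_j$'s and $m_j$'s to the right of position $\ell$ reproduces \eqref{PQExponent} exactly. You then apply Theorem~\ref{TheoWordNO} to the reduced word and commute $Z_p^{|{\bf u}|}$ leftward past $X^{|{\bf m}|-k}$. The only difference is that you make this last commutation, producing $p^{|{\bf u}|(|{\bf m}|-k)}$, explicit, whereas the paper leaves it implicit when passing from the displayed intermediate sum to \eqref{pqrookNOG}.
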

Clearly, if no letters $Z_p$ appear in $\mathrm{w}_{{\bf m},{\bf n},{\bf u}}$, then \eqref{pqrookNOG} reduces to \eqref{pqrookNO}.

The conventional Stirling numbers of the second kind have a combinatorial interpretation as rook numbers of the staircase board, $S(n,k)=r_{n-k}(J_n)$, see \eqref{StairEq}, and the same holds true in the $(p,q)$-deformed case, see \eqref{PQStirlingRookEq}. This identification was slightly indirect by comparing normal ordering coefficients. We now derive the recurrence relation of the $(p,q)$-deformed Stirling numbers using the combinatorial interpretation. Since the proof follows the argument of the conventional case we repeat this argument here first. Since $S(n+1,k)=r_{n-k+1}(J_{n+1})$, we want to count the possibilities to place $(n-k+1)$ non-attacking rooks on the staircase board $J_{n+1}$. Note that we can generate $J_{n+1}$ from $J_n$ by adjoining a column of length $n$ to the left of $J_{n}$. Now, we can place the $(n-k+1)$ rooks in two ways:
\begin{itemize}
\item Type I: We leave the first column empty and place all $(n-k+1)$ rooks in $J_n$. There are $r_{n-k+1}(J_{n})$ possibilities.
\item Type II: We put $(n-k)$ rooks in $J_n$ and put the remaining rook into one of the $k$ possible boxes (the rooks have to be non-attacking). Thus, there exist $k \cdot r_{n-k}(J_{n})$ possibilities.
\end{itemize}  
Thus, in total we have 
$$S(n+1,k)=r_{n-k+1}(J_{n+1})=r_{n-k+1}(J_{n})+k r_{n-k}(J_{n}) =S(n,k-1)+k S(n,k),
$$
as to be shown. Let us turn to the $(p,q)$-deformed case, and let us define 
$$
T_{p,q}(n,k):= \mathscr{R}_{1; p,q}[J_n,n- k].
$$
\begin{proposition}\label{PQPropStirRook} The numbers $T_{p,q}(n,k)$ satisfy $T_{p,q}(0,0)=1$ and the recurrence relation 
$$
T_{p,q}(n+1,k)=p^{n-k+1}q^{k-1}T_{p,q}(n,k-1)+[k]_{p,q}T_{p,q}(n,k).
$$
\end{proposition}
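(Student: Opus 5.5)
We mimic the classical argument recalled just before the statement: decompose $J_{n+1}$ as the staircase board $J_n$ together with a new leftmost column $C$ of length $n$. Since the columns are top-aligned, $C$ meets each of the $n-1$ rows of $J_n$ and adds one new bottom row. The base case is immediate: $J_0$ is the empty board, the only placement is the empty one, and its weight is $1$, so $T_{p,q}(0,0)=1$.

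For $T_{p,q}(n+1,k)=\mathscr{R}_{1;p,q}[J_{n+1},n-k+1]$ we split the placements $\phi$ of $n-k+1$ non-attacking rooks on $J_{n+1}$ according to whether $C$ contains a rook. The first thing to verify is that the classes of the boxes of $J_n$ are unaffected by adjoining $C$. The classification of a box (rook box, $p$-box, cancelled box, $q$-box) depends only on rooks lying below it in its column, or to its right in its row, and on rooks in its own column. Every box of $J_n$ lies to the right of $C$, so a rook in $C$ never enters the classification of a box of $J_n$. Hence in both cases the total weight factors as
\[
\omega_{p,q}(J_n;\phi|_{J_n})\cdot(\text{contribution of the boxes of } C).
\]

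\textbf{Type I} ($C$ empty). All $n-k+1$ rooks lie in $J_n$ and occupy $n-k+1$ distinct rows, all of which meet $C$. A box of $C$ lying in such a row is to the left of exactly one rook. It is not above any rook, since $C$ is empty, and it is not in the column of another rook, so it is a $p$-box. The remaining $n-(n-k+1)=k-1$ boxes of $C$ are neither left of nor above a rook, so they are $q$-boxes. Summing over the placements on $J_n$ gives the term
\[
p^{n-k+1}q^{k-1}\,\mathscr{R}_{1;p,q}[J_n,n-k+1]=p^{n-k+1}q^{k-1}T_{p,q}(n,k-1).
\]

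\textbf{Type II} (one rook in $C$). The other $n-k$ rooks lie in $J_n$, so exactly $k$ rows of $C$ are free; say the rook of $C$ sits in the $j$-th free row from the top, $1\le j\le k$. The boxes of $C$ above this rook are classified as follows.
\begin{itemize}
\item If such a box lies in a row containing a rook of $J_n$, it is above a rook and to the left of another rook, hence cancelled.
\item If it lies in a free row, it is a $p$-box; there are $j-1$ of these.
\end{itemize}
The boxes of $C$ below this rook are classified as follows.
\begin{itemize}
\item If such a box lies in a row with a rook of $J_n$, it is to the left of that rook but lies in the column of another rook, hence cancelled.
\item If it lies in a free row, it is a $q$-box; there are $k-j$ of these.
\end{itemize}
Summing over $j$ gives $\sum_{j=1}^{k}p^{j-1}q^{k-j}=[k]_{p,q}$, so this case contributes $[k]_{p,q}\,T_{p,q}(n,k)$. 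Adding the two cases yields the claimed recurrence.

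The main obstacle is the careful case analysis of the boxes in $C$ against the two exception clauses defining $p$-boxes, in particular confirming that rows of $C$ occupied by rooks of $J_n$ are cancelled in Type II both above and below the new rook. The factorization claim in the second paragraph also needs checking against the exact wording of the definitions, especially the condition ``not lying in the column of another rook''.
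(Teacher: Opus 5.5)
Your proof is correct and follows the paper's argument: the same Type I/Type II split according to whether the adjoined leftmost column of length $n$ is empty, giving the factor $p^{n-k+1}q^{k-1}$ in the first case and $\sum_{j=1}^{k}p^{j-1}q^{k-j}=[k]_{p,q}$ in the second. Your version is more careful than the paper's text in three ways: you check that the boxes of $J_n$ keep their classes, you note that the occupied rows of the new column are cancelled boxes, and you count $k-j$ additional $q$-boxes in Type II, whereas the paper writes $(k-r+1)$ at that step, a slip inconsistent with its own final sum.
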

\begin{proof}
The argument is similar to the conventional case repeated above since the set of rook placements coincides. In the $(p,q)$-deformed case we have to consider the weights of the rook placements. Let us consider $T_{p,q}(n+1,k)= \mathscr{R}_{1; p,q}[J_{n+1},n- k+1]$, i.e., the set of $(n-k+1)$ rook placements on $J_{n+1}$. 

We start with the rook placements of Type I. Let $\phi$ be such a placement. Considered as a placement on $J_{n}$ it has weight $\omega_{p,q}(J_n;\phi)$. However, when considered as a placement on $J_{n+1}$ it has weight $\omega_{p,q}(J_{n+1};\phi)=p^{n-k+1}q^{k-1}\omega_{p,q}(J_n;\phi)$ since in the adjoined column with $n$ boxes one has $(n-k+1)$ $p$-boxes (the rows of the rooks) and $(k-1)$ $q$-boxes (the rows without a rook). Since this holds for all placements of type I, their contribution is given by
$$
p^{n-k+1}q^{k-1}\sum\limits_{\phi\in\mathcal{R}(J_n, n-k+1)} \omega_{p,q}(J_n;\phi)=p^{n-k+1}q^{k-1}\mathscr{R}_{1; p,q}[J_n,n- k+1]=p^{n-k+1}q^{k-1}T_{p,q}(n,k-1).
$$
Now, let us turn to the placements of type II. Let $\phi'$ be a placement of $(n-k)$ rooks on $J_n$ with weight $\omega_{p,q}(J_n;\phi')$. For each such placement we obtain $k$ placements on $J_{n+1}$ by choosing for the $(n-k+1)$-th rook one of the $k$ possible ``free'' boxes. Placing this rook into the first free box from above, there will result no additional $p$-boxes and $(k-1)$ additional $q$-boxes (all free boxes located below this rook). Thus, the weight of this placement on $J_{n+1}$ will be $p^0q^{k-1}\omega_{p,q}(J_n;\phi')$. In general, placing this rook into the $r$-th free box from above (where $r=1,\ldots,k$) will result in $(r-1)$ additional $p$-boxes and $(k-r+1)$ additional $q$-boxes, implying for this placement the weight $p^{r-1}q^{k-r+1}\omega_{p,q}(J_n;\phi')$. Thus, summing the contribution of the weights of all $k$ placements on $J_{n+1}$ resulting from the placement $\phi'$, one finds
$$
(p^0q^{k-1}+p^1q^{k-2}+\cdots+p^{k-2}q^{1}+p^{k-1}q^{0})\,\omega_{p,q}(J_n;\phi')=[k]_{p,q}\,\omega_{p,q}(J_n;\phi').
$$
Since this holds for all placements of type II, their contribution is given by
$$
[k]_{p,q}\sum\limits_{\phi'\in\mathcal{R}(J_n, n-k)} \omega_{p,q}(J_n;\phi')=[k]_{p,q}\mathscr{R}_{1; p,q}[J_n,n- k]=[k]_{p,q}T_{p,q}(n,k).
$$
Adding the contributions from placements of type I and type II yields the assertion. 
\end{proof}
Comparing the above recurrence relation (and initial value) for $T_{p,q}(n,k)$ with the one given in \eqref{pqStirlingRecu} for $S_{p,q}(n,k)$, we see that $T_{p,q}(n,k)=S_{p,q}(n,k)$. Thus, the $(p,q)$-deformed Stirling numbers $S_{p,q}(n,k)$ are given by the $(p,q)$-deformed rook numbers of the staircase board $J_n$, i.e., $S_{p,q}(n,k)=\mathscr{R}_{1; p,q}[J_n,n- k]$, as already mentioned in \eqref{PQStirlingRookEq}.

\subsection{The case $s>0$}\label{SectPQRookS}
The case $s>0$ is similar to the case $s=0$ considered in the preceding section. In particular, one may associate to a word in the letters $X,Y$ and $Z$ an augmented Ferrers board in exactly the same fashion as described in Section~\ref{SectPQRook}. However, there is now a difference when normal ordering. As in the case $s=0$, when selecting the right-most corner of the board (corresponding to a subword $XY$) one has the possibility (1) to commute (corresponding to $XY \rightsquigarrow qYX$), or (2) to contract (corresponding to $XY \rightsquigarrow hY^s Z$). In contrast to the contraction in the case $s=0$ (which resulted in the cancellation of a column and a row) the contraction will result here in the cancellation of a column and the addition of $s$ boxes in each of the columns to the left.   

To be more concrete, we consider the word $\omega=(YX)^3$ -- discussed in Example~\ref{NOEX} for the $(p,q)$-deformed Weyl algebra -- again for the present context in the next example.

\begin{example}\label{NOEXS}
Let $s=2$ and let $\omega=(YX)^3=YXYXYX$. Recall that each time we have a corner, i.e., a subword $XY$, we can either (1) commute $XY \rightsquigarrow qYX$, or (2) contract $XY \rightsquigarrow hY^2 Z$. In Figure~\ref{ABNOS}, the normal ordering of $YXYXYX$ is described in terms augmented Ferrers boards.

\begin{figure}[h]
\centering
\begin{tikzpicture}

\draw (0,0.5) -- (0,2);
\draw (0.5,1) -- (0.5,2);
\draw (1,1.5) -- (1,2);
\draw (0,2) -- (1.5,2);
\draw (0,1.5) -- (1,1.5);
\draw (0,1) -- (0.5,1);
\node at (0.75,-0.25) {\footnotesize $YXYXYX$};

\node at (2.25,1) {$\rightsquigarrow$};

\node at (2.75,1) {$q $};
\draw (3,0.5) -- (3,2);
\draw (3.5,1) -- (3.5,2);
\draw (3,2) -- (4.5,2);
\draw (3,1.5) -- (3.5,1.5);
\draw (3,1) -- (3.5,1);

\node at (4.75,2) {$(1)$};
\node at (3.75,-0.25) {\footnotesize $qYXY^2X^2$};

\node at (5.25,1) {$+$};

\node at (5.75,1) {$h$};
\draw (6,0) -- (6,2);
\draw (6.5,0.5) -- (6.5,1.85);
\draw (6,2) -- (6.35,2);
\draw (6.65,2) -- (7,2);
\draw (6,1.5) -- (6.5,1.5);
\draw (6,1) -- (6.5,1);
\draw (6,0.5) -- (6.5,0.5);
\node at (7.25,2) {$(2)$};

\draw (6.5,2) circle (4pt);
\node at (6.5,2) {\footnotesize 1};

\node at (6.75,-0.25) {\footnotesize $hYXY^3ZX$};


\node at (2.25,-2.25) {$\rightsquigarrow$};

\node at (2.75,-2.25) {$q^2 $};
\draw (3,-2.5) -- (3,-1);
\draw (3.5,-1.5) -- (3.5,-1);
\draw (3,-1) -- (4.5,-1);
\draw (3,-1.5) -- (3.5,-1.5);
\node at (4.9,-1) {$(11)$};

\node at (3.75,-3.75) {\footnotesize $q^2Y^2XYX^2$};

\node at (5.25,-2.25) {$+$};

\node at (5.75,-2.25) {$qh$};
\draw (6,-3) -- (6,-1.65);
\draw (6,-1.35) -- (6,-1);
\draw (6,-1) -- (7,-1);
\node at (7.4,-1) {$(12)$};

\draw (6,-1.5) circle (4pt);
\node at (6,-1.5) {\footnotesize 1};

\node at (6.75,-3.75) {\footnotesize $qhY^3ZYX^2$};

\node at (7.75,-2.25) {$+$};

\node at (8.75,-2.25) {$qh$};
\draw (9,-3) -- (9,-1);
\draw (9.5,-2) -- (9.5,-1.15);
\draw (9,-1) -- (9.35,-1);
\draw (9.65,-1) -- (10,-1);
\draw (9,-1.5) -- (9.5,-1.5);
\draw (9,-2) -- (9.5,-2);

\node at (10.4,-1) {$(21)$};
\draw (9.5,-1) circle (4pt);
\node at (9.5,-1) {\footnotesize 1};

\node at (9.9,-3.75) {\footnotesize $qhY^2XY^2ZX$};

\node at (10.75,-2.25) {$+$};

\node at (11.75,-2.25) {$h^2$};
\draw (12,-3.5) -- (12,-2.15);
\draw (12,-1.85) -- (12,-1.15);
\draw (12.15,-1) -- (12.5,-1);
\node at (12.9,-1) {$(22)$};

\draw (12,-1) circle (4pt);
\node at (12,-1) {\footnotesize 1};

\draw (12,-2) circle (4pt);
\node at (12,-2) {\footnotesize 1};

\node at (12.9,-3.75) {\footnotesize $h^2Y^3ZY^2ZX$};


\node at (2.25,-5.75) {$\rightsquigarrow$};

\node at (2.75,-5.75) {$q^3 $};
\draw (3,-6) -- (3,-4.5);
\draw (3,-4.5) -- (4.5,-4.5);
\node at (5,-4.5) {$(111)$};

\node at (3.75,-7.5) {\footnotesize $q^3Y^3X^3$};

\node at (5.25,-5.75) {$+$};

\node at (5.65,-5.75) {$q^2h$};
\draw (6,-6.5) -- (6,-4.65);
\draw (6.15,-4.5) -- (7,-4.5);
\node at (7.5,-4.5) {$(112)$};

\draw (6,-4.5) circle (4pt);
\node at (6,-4.5) {\footnotesize 1};

\node at (6.75,-7.5) {\footnotesize $q^2h Y^4ZX^2$};

\node at (8.25,-5.75) {$+$};

\node at (8.65,-5.75) {$q^2h$};
\draw (9,-6.5) -- (9,-4.5);
\draw (9.5,-5) -- (9.5,-4.65);
\draw (9,-4.5) -- (9.35,-4.5);
\draw (9.65,-4.5) -- (10,-4.5);
\draw (9,-5) -- (9.5,-5);
\node at (10.5,-4.5) {$(211)$};

\draw (9.5,-4.5) circle (4pt);
\node at (9.5,-4.5) {\footnotesize 1};

\node at (9.9,-7.5) {\footnotesize $q^2h Y^3XYZX$};

\node at (11.25,-5.75) {$+$};

\node at (11.65,-5.75) {$qh^2$};
\draw (12,-7) -- (12,-5.15);
\draw (12,-4.85) -- (12,-4.65);

\draw (12.15,-4.5) -- (12.5,-4.5);

\node at (13,-4.5) {$(212)$};

\draw (12,-4.5) circle (4pt);
\node at (12,-4.5) {\footnotesize 1};
\draw (12,-5) circle (4pt);
\node at (12,-5) {\footnotesize 1};

\node at (12.9,-7.5) {\footnotesize $qh^2 Y^4ZYZX$};

\node at (14.25,-5.75) {$+ (12) + (22)$};


\node at (2.15,-10) {$\rightsquigarrow$};

\node at (2.75,-10) {$q^3h$};
\draw (3,-10.75) -- (3,-8.75);
\draw (3,-8.75) -- (3.35,-8.75);
\draw (3.65,-8.75) -- (4,-8.75);
\node at (4.5,-8.75) {$(2111)$};

\draw (3.5,-8.75) circle (4pt);
\node at (3.5,-8.75) {\footnotesize 1};

\node at (3.75,-11.5) {\footnotesize $q^3 h Y^4XZX$};

\node at (5.15,-10) {$+$};

\node at (5.65,-10) {$q^2h^2$};
\draw (6,-11.25) -- (6,-8.9);
\draw (6.15,-8.75) -- (6.5,-8.75);
\node at (7,-8.75) {$(2112)$};

\draw (6,-8.75) circle (4pt);
\node at (6,-8.75) {\footnotesize 2};

\node at (6.75,-11.5) {\footnotesize $q^2 h^2 Y^5Z^2X$};

\node at (10.5,-10) {$+ (111) + (112) + (212) + (12) + (22)$};
\end{tikzpicture}
\caption{Normal ordering $YXYXYX$ using augmented Ferrers boards $(s=2)$.}\label{ABNOS}
\end{figure}
In Figure~\ref{ABNOS}, in the bottom row the nearly final result is shown. Commuting the letter $Z$ to its middle position will give for each of the boards $(12), (212), (2111)$ a factor of $p$, while giving a factor of $p^2$ for the board $(22)$. Thus, we find
\begin{equation}\label{ABNOEQS}
(YX)^3=q^3Y^3X^3 + \left(q^2 + pq + pq^3\right) h Y^4ZX^2  +\left(p^2 + pq + q^2\right) h^2 Y^5Z^2X.´
\end{equation}
Comparing this to \eqref{StirGen}, we see that
\begin{equation}\label{ABNOEQSStir}
\mathfrak{S}_{2;h}(3,3 |p,q)=q^3, \, \mathfrak{S}_{2;h}(3,2 |p,q)=\left(q^2 + pq + pq^3\right)h, \, \mathfrak{S}_{2;h}(3,1 |p,q)=\left(p^2 + pq + q^2\right) h^2.
\end{equation}
\end{example}

We now introduce, for $s>0$, certain $(p,q)$-weights for $s$-rook placements which generalize the one for $p=1$ introduced by Celeste et al. \cite{CCG2017} and considered in the preceding section, see \eqref{RookCeleste}. 

Let us denote again a rook placement satisfying the $s$-row creation rule by $\phi$, and the set of all rook placements of $k$ rooks satisfying the $s$-row creation rule on the Ferrers board $B$ by $\mathcal{R}_s(B, k)$. Let $\phi \in \mathcal{R}_s(B, k)$. We partition the boxes of $B$ into 5 disjoint classes:
\begin{enumerate}
\item A {\em rook box} is a box of $B$ occupied by a rook.
\item A {\em $t$-box} is a box which is not a rook box and which is lying above a rook and not lying to the left of another rook.
\item An {\em $\ell $-box} is a box which is not a rook box and which is lying to the left of a rook and which is not lying in the column of another rook.
\item A {\em cancelled box} is a box which is not a rook box and which is lying to the left of a rook and lying in the column of another rook.
\item All remaining boxes of $B$ are {\em $e$-boxes} (for ``empty'').
\end{enumerate}
\begin{definition} 
The {\em $(p,q)$-weight} of a placement $\phi$ of $k$ rooks satisfying the $s$-row creation rule on the Ferrers board $B$ is defined to be
\begin{equation}\label{PQWeightS}
\omega_{s;p,q}(B;\phi) :=h^k p^{\# t-boxes + \# \ell-boxes}q^{\# e-boxes}.
\end{equation}
\end{definition} 

\begin{example}
In Figure~\ref{FigFileSRook}, a Ferrers board $B$ is seen where a placement $\phi$ of $5$ rooks has been made with $s=2$. In total there are 84 boxes: $5$ rook boxes, $21$ $t$-boxes, $10$ $\ell$-boxes, $10$ cancelled boxes and $38$ $e$-boxes. Thus, the  $(p,q)$-weight of the placement $\phi$ is given by $h^5p^{31}q^{38}$.
\begin{figure}[h]
\centering
\begin{tikzpicture}

\draw (0,4.5) -- (5,4.5);
\draw (0,4) -- (5,4);
\draw (0,3.5) -- (4,3.5);
\draw (0,3) -- (4,3);
\draw (0,2.5) -- (2.5,2.5);
\draw (0,2) -- (2.5,2);
\draw (0,1.5) -- (1.5,1.5);
\draw (0,1) -- (1.5,1);
\draw (0,0.5) -- (1,0.5);

\draw[dashed] (0,4.33) -- (4,4.33);
\draw[dashed] (0,4.16) -- (4,4.16);

\draw[dashed] (0,3.83) -- (1,3.83);
\draw[dashed] (0,3.66) -- (1,3.66);

\draw[dashed] (0,3.3) -- (3.5,3.3);
\draw[dashed] (0,3.16) -- (3.5,3.16);

\draw[dashed] (0,2.33) -- (1.5,2.33);
\draw[dashed] (0,2.16) -- (1.5,2.16);

\draw (0,0.5) -- (0,4.5);
\draw (0.5,0.5) -- (0.5,4.5); 
\draw (1,0.5) -- (1,4.5); 
\draw (1.5,1) -- (1.5,4.5);
\draw (2,2) -- (2,4.5);
\draw (2.5,2) -- (2.5,4.5);
\draw (3,3) -- (3,4.5);
\draw (3.5,3) -- (3.5,4.5);
\draw (4,3) -- (4,4.5);
\draw (4.5,4) -- (4.5,4.5);
\draw (5,4) -- (5,4.5);

\filldraw (4.25,4.25) circle (1.5pt);
\filldraw (1.25,3.75) circle (1.5pt);
\filldraw (3.75,3.25) circle (1.5pt);
\filldraw (1.75,2.25) circle (1.5pt);
\filldraw (0.25,1.25) circle (1.5pt);

\node at (6,2) {$\rightsquigarrow $};
\draw (7,4.5) -- (12,4.5);
\draw (7,4) -- (12,4);
\draw (7,3.5) -- (11,3.5);
\draw (7,3) -- (11,3);
\draw (7,2.5) -- (11,2.5);
\draw (7,2) -- (11,2);
\draw (7,1.5) -- (10.5,1.5);
\draw (7,1) -- (10.5,1);
\draw (7,0.5) -- (9.5,0.5);
\draw (7,0) -- (9.5,0);
\draw (7,-0.5) -- (8.5,-0.5);
\draw (7,-1) -- (8.5,-1);
\draw (7,-1.5) -- (8.5,-1.5);
\draw (7,-2) -- (8.5,-2);
\draw (7,-2.5) -- (8,-2.5);
\draw (7,-3) -- (8,-3);
\draw (7,-3.5) -- (8,-3.5);

\draw (7,-3.5) -- (7,4.5);
\draw (7.5,-3.5) -- (7.5,4.5);
\draw (8,-3.5) -- (8,4.5);
\draw (8.5,-2) -- (8.5,4.5);
\draw (9,0) -- (9,4.5);
\draw (9.5,0) -- (9.5,4.5);
\draw (10,1) -- (10,4.5);
\draw (10.5,1) -- (10.5,4.5);
\draw (11,2) -- (11,4.5);
\draw (11.5,4) -- (11.5,4.5);
\draw (12,4) -- (12,4.5);

\filldraw (11.25,4.25) circle (1.5pt);
\filldraw (10.75,2.25) circle (1.5pt);
\filldraw (8.75,0.25) circle (1.5pt);
\filldraw (8.25,2.75) circle (1.5pt);
\filldraw (7.25,-2.75) circle (1.5pt);

\node at (7.25,4.25) {$x$};
\node at (7.25,3.75) {$t$};
\node at (7.25,3.25) {$t$};
\node at (7.25,2.75) {$x$};
\node at (7.25,2.25) {$x$};
\node at (7.25,1.75) {$t$};
\node at (7.25,1.25) {$t$};
\node at (7.25,0.75) {$t$};
\node at (7.25,0.25) {$x$};
\node at (7.25,-0.25) {$t$};
\node at (7.25,-0.75) {$t$};
\node at (7.25,-1.25) {$t$};
\node at (7.25,-1.75) {$t$};
\node at (7.25,-2.25) {$t$};

\node at (7.75,4.25) {$\ell$};
\node at (7.75,2.75) {$\ell$};
\node at (7.75,2.25) {$\ell$};
\node at (7.75,0.25) {$\ell$};

\node at (8.25,4.25) {$x$};
\node at (8.25,3.75) {$t$};
\node at (8.25,3.25) {$t$};
\node at (8.25,2.25) {$x$};
\node at (8.25,0.25) {$x$};

\node at (8.75,4.25) {$x$};
\node at (8.75,3.75) {$t$};
\node at (8.75,3.25) {$t$};
\node at (8.75,2.75) {$t$};
\node at (8.75,2.25) {$x$};
\node at (8.75,1.75) {$t$};
\node at (8.75,1.25) {$t$};
\node at (8.75,0.75) {$t$};

\node at (9.25,4.25) {$\ell$};
\node at (9.25,2.25) {$\ell$};

\node at (9.75,4.25) {$\ell$};
\node at (9.75,2.25) {$\ell$};

\node at (10.25,4.25) {$\ell$};
\node at (10.25,2.25) {$\ell$};

\node at (10.75,4.25) {$x$};
\node at (10.75,3.75) {$t$};
\node at (10.75,3.25) {$t$};
\node at (10.75,2.75) {$t$};

\end{tikzpicture}
\caption{The Ferrers board $B_{8875533311}$ with a $5$-rook placement for $s=2$ (left) and the equivalent representation with boxes marked by their class (right).}\label{FigFileSRook}
\end{figure} 
\end{example}

In analogy to \eqref{RookCeleste} (for $s>0$ but $p=1$) and \eqref{PQRookWe} (for $s=0$ but arbitrary $p$) we define the associated {\em $(p,q)$-deformed $s$-rook numbers} as follows.
\begin{definition}
Let $s>0$. Let $B$ be a Ferrers board and let $\mathcal{R}_s(B, k)$ be the set of placements of $k$ rooks on $B$ satisfying the $s$-row creation rule. Then the {\em $(p,q)$-deformed $s$-rook numbers} are defined by
\begin{equation}\label{PQRookWeS}
\mathscr{R}_{s,h; p,q}[B, k] :=\sum\limits_{\phi\in\mathcal{R}_s(B, k)} \omega_{s;p,q}(B;\phi)=h^k \sum\limits_{\phi\in\mathcal{R}_s(B, k)} p^{\# t-boxes + \# \ell-boxes}q^{\# e-boxes}.
\end{equation}
\end{definition}
Clearly, $\mathscr{R}_{s,h; p,q}[B, k] =h^k \mathscr{R}_{s,1; p,q}[B, k] $. Observe that for $p=1$ the weight of a placement reduces to $\omega_{s;p,q}(B;\phi) =h^k q^{ \# e-boxes}$, i.e., the parameter $q$ counts the number of boxes not containing a rook and not lying above or to the left of a rook. But this is exactly the exponent denoted by $v_s(\phi)$ in Section~\ref{SectNORook}. (Recall that we use here the original interpretation of $s$-rook placements where $s$ additional rows are generated when placing a rook and where the rooks are non-attacking, while above the rows to the left are divided into $s$ rows and there is no such restriction on placing the subsequent rooks, see Remark~\ref{IRookEq}.)

Thus, for $p=1$, the $(p,q)$-deformed $s$-rook numbers introduced above reduce to those given in \eqref{RookCeleste}, i.e.,
$$
\mathscr{R}_{s,h; 1,q}[B, k] =R_{s, h; q}[B, k].
$$
For $s=0$, we identified in \eqref{PQStirlingRookEq} the $(p,q)$-deformed Stirling numbers of the second kind as $(p,q)$-deformed rook numbers of the staircase board, thereby generalizing the first equation of \eqref{StairEq}. The unsigned Stirling numbers $|s(n,k)|$ -- also called {\em cycle numbers} $c(n,k)=|s(n,k)|$ -- have a similar interpretation as file numbers, see second equation of \eqref{StairEq}, or, using \eqref{RookRed}, as $1$-rook numbers, 
$$
c(n,k)=f_{n-k}(J_n)=r_{n-k}^{(1)}(J_n).
$$
\begin{definition}
Let $n,k \in \mathbb{N}_0$ and let $k\leq n$. The {\em $(p,q)$-deformed cycle numbers} are defined by
\begin{equation}\label{pqcycleWe}
\hat{c}_{p,q}(n,k):=\mathscr{R}_{1,1; p,q}[J_n, n-k].
\end{equation}
\end{definition}
Clearly, for $p=q=1$, they reduce to the conventional cycle numbers, $\hat{c}_{1,1}(n,k)=c(n,k)$. Using the combinatorial interpretation, we can derive the recurrence relation for $\hat{c}_{p,q}(n,k)$ in the same fashion as for the $(p,q)$-deformed Stirling  numbers of the second kind, see Proposition~\ref{PQPropStirRook}.

\begin{proposition}\label{PQPropCycRook} The $(p,q)$-deformed cycle numbers $\hat{c}_{p,q}(n,k)$ satisfy $\hat{c}_{p,q}(0,0)=1$ and the recurrence relation 
$$
\hat{c}_{p,q}(n+1,k)=p^{n-k+1}q^{n}\hat{c}_{p,q}(n,k-1)+[n]_{p,q}\hat{c}_{p,q}(n,k).
$$
\end{proposition}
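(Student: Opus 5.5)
Following the template of Proposition~\ref{PQPropStirRook}, I would expand $\hat{c}_{p,q}(n+1,k)=\mathscr{R}_{1,1;p,q}[J_{n+1},n-k+1]$ by splitting the placements according to the leftmost column. The staircase board $J_{n+1}$ is $J_n$ with a column of length $n$ adjoined on the left. With the $1$-row creation rule, each rook placed to the right creates a new row to its left. So under the interpretation of Remark~\ref{IRookEq} (create $s-1=0$ new rows, rooks may share rows), every box of the new column is available no matter what lies to the right: a $1$-rook placement is a file placement. The base value $\hat{c}_{p,q}(0,0)=1$ is the weight of the empty placement on the empty board.

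\emph{Type I} (first column empty). Let $\phi$ be a placement of $n-k+1$ rooks in $J_n$. I will classify the boxes of the adjoined column in the extended board picture of Figure~\ref{FigFileSRook}. With $s=1$, each rook placed to the right generates one extra row on the left, so the new column has $n+(n-k+1)$ boxes in that picture. None of them is a rook box or lies above a rook, so none is a $t$-box. A box lying to the left of one of the $n-k+1$ rooks is an $\ell$-box, because this column contains no rook. Each of the $n-k+1$ rook rows therefore contributes exactly one $\ell$-box. The remaining $n$ boxes are $e$-boxes: $k-1$ original rows with no rook, plus $n-k+1$ created rows. Hence $\omega_{1;p,q}(J_{n+1};\phi)=p^{n-k+1}q^{n}\,\omega_{1;p,q}(J_n;\phi)$.

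I must also check that the classes of the boxes inside $J_n$ are unchanged. This holds because the classification of a box depends only on rooks to its right or below it in its column, and nothing is added there. Summing over $\phi$ gives $p^{n-k+1}q^{n}\hat{c}_{p,q}(n,k-1)$.

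\emph{Type II} (one rook in the first column). Take $\phi'$, a placement of $n-k$ rooks in $J_n$, and add one rook in the first column. In the $1$-rook picture, with rooks allowed to share rows, there are $n$ choices for its box, which is what makes $[n]_{p,q}$ appear. If the rook is placed in the $r$-th available position from the top ($r=1,\dots,n$), I would show that the adjoined column contributes $p^{r-1}q^{n-r}$ relative to $\omega_{1;p,q}(J_n;\phi')$. The $r-1$ boxes above the new rook would be $t$-boxes or cancelled boxes, in such a way that the $p$-exponent is exactly $r-1$; the $n-r$ boxes below it would be $e$-boxes. The left-of-rook boxes in this column are cancelled, since they lie in the column of another rook. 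Summing over $r$ gives $[n]_{p,q}\,\omega_{1;p,q}(J_n;\phi')$, and summing over $\phi'$ gives $[n]_{p,q}\hat{c}_{p,q}(n,k)$. Adding the two types yields the recurrence.

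\emph{Main obstacle.} The delicate step is the bookkeeping in Type II. In the extended board the new column has $n+(n-k)$ boxes: the $n-k$ boxes lying left of rooks of $\phi'$ are cancelled when a rook sits in the column, and the positions available to the new rook must be matched with the $n$ genuine choices. I would first resolve this by passing to the interpretation of Remark~\ref{IRookEq} and checking it against Example~\ref{NOEXS} with $n=2$. Specifically, I would verify that the boxes above the new rook that lie to the left of an earlier rook are indeed cancelled (a $t$-box is required not to lie left of another rook), and that the exponents $r-1$ and $n-r$ come out exactly. If the count is off by such shifts, I would reindex the available positions until the column weights become $p^{r-1}q^{n-r}$.
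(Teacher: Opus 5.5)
Your decomposition by the leftmost column and your Type I count are exactly the paper's, and that part is fine. In the extended picture you correctly find $n-k+1$ $\ell$-boxes, $n$ $e$-boxes and no $t$-boxes.

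The gap is in Type II, which is where $[n]_{p,q}$ has to come from. You only announce the target weight $p^{r-1}q^{n-r}$. The route you propose for verifying it, passing to the interpretation of Remark~\ref{IRookEq}, does not work: the weight $\omega_{1;p,q}$ belongs to the original picture with created rows and non-attacking rooks. In the compressed picture the count is actually wrong. Take $n=2$, $k=1$, so $\phi'$ is the unique rook of $J_2$. The adjoined column then has two boxes, and the top one lies to the left of that rook.
\begin{itemize}
\item A new rook in the top box gives column weight $q$.
\item A new rook in the bottom box makes the top box cancelled (it lies left of a rook and in the column of another rook), giving weight $1$.
\end{itemize}
The total is $1+q\neq[2]_{p,q}=p+q$. ``Reindexing the available positions'' cannot repair this, because relabeling does not change the multiset of weights. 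For the same reason, your sentence that the $r-1$ boxes above the new rook are ``$t$-boxes or cancelled boxes'' with $p$-exponent exactly $r-1$ cannot hold as stated.

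What is missing is to carry out the Type II count in the extended picture, as the paper does (compare the proof of Proposition~\ref{PQPropStirRookS} with $s=1$). There the adjoined column has $n+(n-k)$ boxes. Exactly $n-k$ of them lie in rows of rooks of $\phi'$: each rook owns one row, these rows are distinct because the rooks are non-attacking, and every row reaches the tallest, leftmost column. These $n-k$ boxes are the forbidden positions. Once a rook sits in the column, each of them is cancelled, whether it lies above or below the new rook, since it is left of a rook and in the column of another rook.

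The other $n$ boxes are the available positions, and none of them lies to the left of any rook. If the new rook occupies the $r$-th of them from the top, the $r-1$ available boxes above it are $t$-boxes and the $n-r$ available boxes below it are $e$-boxes. So the column contributes exactly $p^{r-1}q^{n-r}$, and summing over $r=1,\dots,n$ gives $[n]_{p,q}$. With this count in place, your argument goes through.
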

\begin{proof}
The proof is very similar to the one given in Proposition~\ref{PQPropStirRook}. Let us consider $\hat{c}_{p,q}(n+1,k)= \mathscr{R}_{1,1; p,q}[J_{n+1},n-k+1]$, i.e., the set of $(n-k+1)$ file placements on $J_{n+1}$. 

Let $\phi$ be a file placement of Type I. Considered as a placement on $J_{n}$ it has weight $\omega_{1;p,q}(J_n;\phi)$. However, when considered as a placement on $J_{n+1}$ it has weight $\omega_{1;p,q}(J_{n+1};\phi)=p^{n-k+1}q^{n}\omega_{1;p,q}(J_n;\phi)$ since in the adjoined column with $n$ boxes one has no $t$-boxes, $(n-k+1)$ $\ell $-boxes (the rows of the rooks) and $n$ $e$-boxes (the rows without a rook). Since this holds for all placements of type I, their contribution is given by
$$
p^{n-k+1}q^{n}\sum\limits_{\phi\in\mathcal{R}_1(J_n, n-k+1)} \omega_{1;p,q}(J_n;\phi)=p^{n-k+1}q^{n}\mathscr{R}_{1,1; p,q}[J_n,n-k+1]=p^{n-k+1}q^{n}\hat{c}_{p,q}(n,k-1).
$$
Now, let us turn to the placements of type II. Let $\phi'$ be a file placement of $(n-k)$ rooks on $J_n$ with weight $\omega_{1;p,q}(J_n;\phi')$. For each such placement we obtain $n$ placements on $J_{n+1}$ by choosing for the $(n-k+1)$-th rook one of the $n$ boxes. Placing this rook into the $r$-th box from above (where $r=1,\ldots,n$) will result in $(r-1)$ additional $t$-boxes and $(k-r+1)$ additional $e$-boxes, implying for this placement the weight $p^{r-1}q^{k-r+1}\omega_{1;p,q}(J_n;\phi')$. Thus, summing the contribution of the weights of all $n$ placements on $J_{n+1}$ resulting from the placement $\phi'$, one finds
$$
(p^0q^{n-1}+p^1q^{n-2}+\cdots+p^{n-2}q^{1}+p^{n-1}q^{0})\, \omega_{1;p,q}(J_n;\phi')=[n]_{p,q}\, \omega_{1;p,q}(J_n;\phi').
$$
Since this holds for all placements of type II, their contribution is given by
$$
[n]_{p,q}\sum\limits_{\phi'\in\mathcal{R}_1(J_n, n-k)} \omega_{1;p,q}(J_n;\phi')=[n]_{p,q}\mathscr{R}_{1,1; p,q}[J_n,n- k]=[n]_{p,q}\hat{c}_{p,q}(n,k).
$$
Adding the contributions from placements of type I and type II yields the assertion. 
\end{proof}
Comparing the recurrence relation for $\hat{c}_{p,q}(n,k)$ given above with \eqref{Stirlingrecu}, for $s=1$, we have the identification $\hat{c}_{p,q}(n,k)=\mathfrak{S}_{1;1}(n,k|p,q)$.  

\begin{remark}
A standard way to define the $(p,q)$-deformed cycle numbers is to define them in terms of the elementary symmetric functions, $c_{p,q}(n,k)=e_{n-k}([1]_{p,q},[2]_{p,q},\ldots,[n-1]_{p,q})$. They satisfy the recurrence $c_{p,q}(n+1,k)=c_{p,q}(n,k-1)+[n]_{p,q}c_{p,q}(n,k)$, see \cite{dML1993}. Thus, the $(p,q)$-deformed cycle numbers $\hat{c}_{p,q}(n,k)$ defined in \eqref{pqcycleWe} do not coincide with the standard definition $c_{p,q}(n,k)$. For $p=1$, both versions reduce to the same $q$-deformed cycle numbers.
\end{remark}

In general, we have the following result.
\begin{theorem}\label{TheoWordNOS}
Let $\mathrm{w}_{{\bf m}, {\bf n}}=Y^{n_r}X^{m_r}\cdots Y^{n_1}X^{m_1}$ be a word in the letters $X$ and $Y$ satisfying the commutation relations \eqref{pqdefgenWeyl} of the $(p,q)$-deformed generalized Weyl algebra $A_{s;h|p,q}$. Then one has the normal ordering result
\begin{equation}\label{pqrookNOS}
\mathrm{w}_{{\bf m}, {\bf n}}=\sum_{k=0}^{|{\bf m}|-m_1} \mathscr{R}_{s,h; p,q}[B_{\mathrm{w}_{{\bf m}, {\bf n}}}, k] \, Y^{|{\bf n}|+(s-1)k}Z_p^{k}X^{|{\bf m}|-k},
\end{equation}
where $B_{\mathrm{w}_{{\bf m}, {\bf n}}}$ is the Ferrers board associated to $\mathrm{w}_{{\bf m}, {\bf n}}$ and $\mathscr{R}_{s,h; p,q}[B_{\mathrm{w}_{{\bf m}, {\bf n}}}, k]$ is given by \eqref{PQRookWeS}.
\end{theorem}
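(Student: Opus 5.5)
We prove Theorem~\ref{TheoWordNOS} by induction on the number of letters $X$ that lie to the left of some letter $Y$ in the word, i.e.\ on the number of columns of $B_{\mathrm{w}_{{\bf m},{\bf n}}}$. This mirrors the normal ordering procedure of Example~\ref{NOEXS}. The induction is carried out on a slightly stronger statement, for words of the form $\omega\, Y^{a}Z_p^{b}X^{c}$, where $\omega$ is a word in $X,Y$ whose last column is the rightmost corner. Taking $a=b=c=0$ recovers the theorem.

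\textbf{Step 1 (recursion).} Choose the rightmost corner of the word, i.e.\ the subword $XY$ in $\mathrm{w}=uXYv$ with $v$ already of the form $Y^{a}Z_p^{b}X^{c}$ after earlier steps. We may first move any letters $Z_p$ sitting at the corner upward past the $Y$'s to the left. By \eqref{pqdefgenWeyl} each such move contributes a factor $p$ per letter passed, exactly as in Figure~\ref{AugmentedBoard2}. We then apply $XY=qYX+hY^sZ_p$, which gives
\[
\mathrm{w}=q\,uYXv+h\,uY^sZ_pv .
\]
On the level of the board, the first term is the board with the corner cell removed (the cell is left empty). The second term removes the whole column of the rightmost $X$ and adds $s$ new rows to all columns to the left. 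These new rows are exactly the $s$-row creation rule, in the form of Remark~\ref{IRookEq}. Iterating this over all corners gives a bijection between leaves of the expansion tree and placements $\phi\in\mathcal{R}_s(B,k)$. Here $k$ counts the contractions, and each leaf has shape $Y^{|{\bf n}|+(s-1)k}Z_p^kX^{|{\bf m}|-k}$ up to scalars. The exponent of $Y$ is $|{\bf n}|+(s-1)k$ because each contraction deletes one $Y$ and one $X$ and inserts $s$ letters $Y$. The range $k\le |{\bf m}|-m_1$ holds because the last $m_1$ letters $X$ never meet a $Y$.

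\textbf{Step 2 (weights).} Fix a leaf and compare its scalar with $\omega_{s;p,q}(B;\phi)$. Each contraction contributes $h$. Each commutation step acts on a cell that is neither a rook cell nor deleted, and contributes $q$. These are exactly the cells not above and not to the left of any rook, i.e.\ the $e$-boxes. Note that cells above a rook in its own column disappear when that column is removed, and cells to the left of a rook lie in rows that are consumed.

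It remains to account for the powers of $p$. Each $Z_p$ created at a rook must travel to the middle position, and passes one letter $Y$ or $X$ for each cell above the rook in its column that survives (the $t$-boxes) and for each cell to its left in its row that survives (the $\ell$-boxes). We must show that the letters passed correspond bijectively to $t$- and $\ell$-boxes. A cell above the rook but to the left of another rook further right corresponds to a $Y$ that has already been consumed by that rook's contraction, so it is not passed. A cell to the left lying in another rook's column corresponds to an $X$ already consumed, and the cell is cancelled. Following the bookkeeping in Example~\ref{NOEXS}, the passing may be done at the end, once the word has shape $Y^{\ast}Z_p\cdots$. The total power of $p$ is independent of the path because $Z_p$ $p$-commutes with both generators.

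\textbf{Main obstacle.} The delicate point is showing that the $p$-exponent equals exactly $\#t+\#\ell$ when the $s$ newly created rows are present. The difficulty is that a created $Y^s$ contributes letters that later $Z_p$'s must pass, and these correspond to cells in the new rows, which lie above later rooks. I would handle this by an inductive invariant. For the partially normal ordered word $u\,Y^{a}Z_p^{b}X^{c}$, the accumulated $p$-exponent equals the number of $t$- and $\ell$-boxes determined so far, with the $Z_p$'s placed at the corner. I would verify the invariant under one contraction step by counting the $Y$'s of $u$ above the corner (the new $t$-boxes) and the $X$'s to the right of the corner (the new $\ell$-boxes). As a sanity check, I would compare the resulting formula with \eqref{ABNOEQS}.
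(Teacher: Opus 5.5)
Your route is the one the paper takes. The theorem is stated there without a separate proof and rests on the augmented-board procedure: process the rightmost corner, commute or contract, and first move any $Z_p$ at the corner to the top. Your handling of $h$, of $q$, and of the bijection with $\mathcal{R}_s(B,k)$ is fine.

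\textbf{The gap.} It is in the $p$-accounting, which is the only new ingredient. The rule in your Step 1 (push a $Z_p$ at a corner upward before processing the corner) is the right one. Step 2, however, defers all moves of $Z_p$ by asserting that the total power of $p$ is path independent because $Z_p$ $p$-commutes with $X$ and $Y$. For $s\ge1$ this is false, because a contraction replaces one $Y$ by $s$ of them. Resolving the overlap $XZ_pY$ in the two possible ways gives
\[
(XZ_p)Y-X(Z_pY)=(p^{s}-1)\,p\,h\,Y^{s}Z_p^{2}.
\]
Concretely, take $s=2$, $(YX)^4=Y_1X_1Y_2X_2Y_3X_3Y_4X_4$, and the placement with $X_3$ empty, a rook at $(X_2,Y_3)$ and a rook at $(X_1,Y_4)$. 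Its weight is $h^2pq^4$. But if the first $Z_p$ is left where it is created and $X_1$ crosses it before contracting with $Y_4$, one obtains $h^2p^3q^4$. So the coefficients of terms with $k\ge2$ depend on the reduction strategy; indeed $Y^sZ_p^2=0$ in $A_{s;h|p,q}$ when $p\neq0$ and $p^s\neq1$. The proof must therefore fix a convention instead of invoking path independence. For example, keep the tail normal ordered after every step, so that no $X$ ever crosses a $Z_p$ that still has unprocessed $Y$'s to its right.

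\textbf{The invariant check.} With that convention the invariant you postpone is a two-line check, but your sketch of it points the wrong way. Let the $X$ being processed stand in front of the tail $Y^aZ_p^bX^c$. Here $Y^a$ lists, from bottom to top, the cells of its column in the expanded board that are not in the row of a rook to the right, with created rows sitting at the height of the consumed row. The exponent $b$ is the number of rooks to the right. Commuting all the way through gives $q^ap^b$, accounting for the $a$ $e$-boxes and the $b$ $\ell$-boxes of that column. Contracting at the $j$-th $Y$ from below gives $h\,q^{j-1}p^{a-j}$ and the new tail $Y^{a+s-1}Z_p^{b+1}X^c$. Thus the $t$-boxes are the $a-j$ tail $Y$'s above the corner, not $Y$'s of $u$. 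Likewise, $\ell$-boxes are never produced at a contraction step from $X$'s to the right of the corner. They arise later, when an uncontracted $X$ further left crosses the $Z_p$-block.
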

Letting $\ell = |{\bf m}|-k$ in \eqref{pqrookNOS}, a comparison with \eqref{pqrook} shows that one has in analogy to \eqref{Identification} the identification
\begin{equation}\label{IdentStirRookS}
S_{s, h; p, q}^{{\bf m}, {\bf n}}[\ell]=\mathscr{R}_{s,h; p,q}[B_{\mathrm{w}_{{\bf m}, {\bf n}}}, |{\bf m}|-\ell] .
\end{equation}

\begin{corollary}\label{PQStirlingRookS}
Let us consider the word $\omega=(YX)^n$, for $n \in \mathbb{N}$. Since $B_{(YX)^n}=J_{n}$, \eqref{pqrookNOS} implies that
\begin{equation}\label{PQStirlingRookSEQ}
(YX)^n=\sum_{\ell=0}^{n} \mathscr{R}_{s,h; p,q}[J_{n}, n-\ell ] \, Y^{s(n-\ell)+\ell }Z_p^{n-\ell}X^{\ell}.
\end{equation}
Comparing with \eqref{StirGen}, we find $\mathfrak{S}_{s;h}(n,\ell |p,q)=\mathscr{R}_{s,h; p,q}[J_{n}, n-\ell ]$, for $n\geq 1$. 
\end{corollary}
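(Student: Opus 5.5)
The plan is to obtain Corollary~\ref{PQStirlingRookS} as a direct specialization of Theorem~\ref{TheoWordNOS}, followed by a coefficient comparison with \eqref{StirGen}. First I take $r=n$ and ${\bf m}={\bf n}={\bf 1}_n$, so that $\mathrm{w}_{{\bf m},{\bf n}}=(YX)^n$. Then $|{\bf m}|=|{\bf n}|=n$ and $m_1=1$, and the associated Ferrers board is the staircase board $B_{(YX)^n}=J_n$ (as already noted in the example of Section~\ref{SectNORook}). With these values, \eqref{pqrookNOS} gives
\begin{equation*}
(YX)^n=\sum_{k=0}^{n-1}\mathscr{R}_{s,h;p,q}[J_n,k]\,Y^{n+(s-1)k}Z_p^{k}X^{n-k}.
\end{equation*}

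Next I reindex by $\ell=n-k$, which makes $\ell$ run from $1$ to $n$. The $Y$-exponent becomes $n+(s-1)(n-\ell)=s(n-\ell)+\ell$, which is exactly \eqref{PQStirlingRookSEQ} once the $\ell=0$ term is included. That term is $\mathscr{R}_{s,h;p,q}[J_n,n]$. It vanishes because $J_n$ has only $n-1$ columns, and every placement in $\mathcal{R}_s(J_n,n)$ would need $n$ rooks in distinct columns, so $\mathcal{R}_s(J_n,n)=\emptyset$. Hence the sum may be extended to $\ell=0$ without changing it.

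Finally I compare \eqref{PQStirlingRookSEQ} with \eqref{StirGen}. The two expansions are written in the same family of normal ordered monomials $Y^{s(n-\ell)+\ell}Z_p^{n-\ell}X^{\ell}$, and these are pairwise distinct because their $X$-exponents differ. The point to justify here is that the normal ordered form is unique, i.e.\ that the monomials $Y^aZ_p^bX^c$ are linearly independent in $A_{s;h|p,q}$. This is the standing assumption behind Definition~\ref{rooknum} and the definition of $\mathfrak{S}_{s;h}$ in \eqref{StirGen}, taken from \cite{TLM1}, and I will simply invoke it. Equating coefficients then gives $\mathfrak{S}_{s;h}(n,\ell|p,q)=\mathscr{R}_{s,h;p,q}[J_n,n-\ell]$ for $n\ge1$. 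For $\ell=0$ this reads $\mathfrak{S}_{s;h}(n,0|p,q)=0$, which is consistent with the recurrence \eqref{Stirlingrecu}.

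I expect no real obstacle, since all the content sits in Theorem~\ref{TheoWordNOS}. The only delicate points are the empty $\ell=0$ term and the index bookkeeping. As an optional sanity check, one could also verify the identity independently. To do so, repeat the Type I/Type II decomposition of Propositions~\ref{PQPropStirRook} and \ref{PQPropCycRook}, adjoining a column of length $n$ to $J_n$ and tracking the $t$-, $\ell$- and $e$-boxes under the $s$-row creation rule. The aim is to show that $\mathscr{R}_{s,h;p,q}[J_n,n-\ell]$ satisfies \eqref{Stirlingrecu}, and to test it against Example~\ref{NOEXS} for $n=3$, $s=2$.
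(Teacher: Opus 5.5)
Your proposal is the paper's own argument: specialize Theorem~\ref{TheoWordNOS} to ${\bf m}={\bf n}={\bf 1}_n$ (so the board is $J_n$ and $k\le n-1$), reindex by $\ell=n-k$, and compare with \eqref{StirGen}; your check that the added $\ell=0$ term $\mathscr{R}_{s,h;p,q}[J_n,n]$ vanishes because $J_n$ has only $n-1$ columns is a point the paper leaves implicit. One caution about the uniqueness of normal forms you invoke (as does the paper): it fails for $s\ge 1$, $p\ne 0$, $p^s\ne 1$, because expanding $X(Z_pY)$ and $(XZ_p)Y$ gives $ph(p^s-1)Y^sZ_p^2=0$, so every monomial in \eqref{StirGen} with $n-\ell\ge 2$ is zero in the algebra, and for those $\ell$ the identification really rests on your ``optional'' recurrence check, i.e.\ Proposition~\ref{PQPropStirRookS} compared with \eqref{Stirlingrecu}, with $\mathfrak{S}_{s;h}(n,\ell|p,q)$ read as defined by that recurrence.
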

\begin{example}
Let us choose $s=2$ and consider $(YX)^3$. From \eqref{PQStirlingRookSEQ}, we have 
$$
(YX)^3= \mathscr{R}_{2,h; p,q}[J_{3}, 2] \, Y^5Z_p^2X+\mathscr{R}_{2,h; p,q}[J_{3}, 1] \, Y^4 Z_pX^2 + \mathscr{R}_{2,h; p,q}[J_{3}, 0] \, Y^3X^3.
$$
Using $\mathfrak{S}_{2;h}(n,\ell |p,q) =\mathscr{R}_{2,h; p,q}[J_{n}, n-\ell ]$ and \eqref{ABNOEQSStir}, we find that
$$
\mathscr{R}_{2,h; p,q}[J_{3}, 0]=q^3, \,\, \mathscr{R}_{2,h; p,q}[J_{3}, 1]=\left(q^2 + pq + pq^3\right)h, \,\, \mathscr{R}_{2,h; p,q}[J_{3}, 2]=\left(p^2 + pq + q^2\right) h^2.
$$
For comparison, we have drawn the corresponding rook placements and their $(p,q)$-weights in Figure~\ref{ABNOS2}, showing, e.g., that $\mathscr{R}_{2,h; p,q}[J_{3}, 1]=\left(q^2 + pq + pq^3\right)h$. The label of the corresponding normal ordered augmented Ferrers board from Figure~\ref{ABNOS} is also given. 
\begin{figure}[h]
\centering
\begin{tikzpicture}

\draw (0,0.5) -- (0,2);
\draw (0.5,1) -- (0.5,2);
\draw (1,1.5) -- (1,2);
\draw (0,2) -- (1.5,2);
\draw (0,1.5) -- (1,1.5);
\draw (0,1) -- (0.5,1);
\node at (0.65,-0.75) {\footnotesize $(111)\leftrightarrow q^3$};

\draw (2,0.5) -- (2,2);
\draw (2.5,1) -- (2.5,2);
\draw (3,1.5) -- (3,2);
\draw (2,2) -- (3.5,2);
\draw (2,1.5) -- (3,1.5);
\draw (2,1) -- (2.5,1);
\filldraw (2.25,1.75) circle (1.5pt);
\node at (2.85,-0.75) {\footnotesize $(112)\leftrightarrow q^2h $};

\draw (4,0.5) -- (4,2);
\draw (4.5,1) -- (4.5,2);
\draw (5,1.5) -- (5,2);
\draw (4,2) -- (5.5,2);
\draw (4,1.5) -- (5,1.5);
\draw (4,1) -- (4.5,1);
\node at (4.25,1.75) {$t$};
\filldraw (4.25,1.25) circle (1.5pt);
\node at (4.75,-0.75) {\footnotesize $(12)\leftrightarrow pqh$};

\draw (6,-0.5) -- (6,2);
\draw (6.5,0) -- (6.5,2);
\draw (7,1.5) -- (7,2);
\draw (6,2) -- (7.5,2);
\draw (6,1.5) -- (7,1.5);
\draw (6,1) -- (6.5,1);
\draw (6,0.5) -- (6.5,0.5);
\draw (6,0) -- (6.5,0);
\node at (6.25,1.75) {$\ell$};
\filldraw (6.75,1.75) circle (1.5pt);
\node at (6.9,-0.75) {\footnotesize $(2111)\leftrightarrow pq^3h$};

\draw (8,-0.5) -- (8,2);
\draw (8.5,0) -- (8.5,2);
\draw (9,1.5) -- (9,2);
\draw (8,2) -- (9.5,2);
\draw (8,1.5) -- (9,1.5);
\draw (8,1) -- (8.5,1);
\draw (8,0.5) -- (8.5,0.5);
\draw (8,0) -- (8.5,0);
\node at (8.25,1.75) {$x$};
\node at (8.25,1.25) {$t$};
\node at (8.25,0.75) {$t$};
\filldraw (8.25,0.25) circle (1.5pt);
\filldraw (8.75,1.75) circle (1.5pt);
\node at (8.75,-0.75) {\footnotesize $(22)\leftrightarrow p^2h^2$};

\draw (10,-0.5) -- (10,2);
\draw (10.5,0) -- (10.5,2);
\draw (11,1.5) -- (11,2);
\draw (10,2) -- (11.5,2);
\draw (10,1.5) -- (11,1.5);
\draw (10,1) -- (10.5,1);
\draw (10,0.5) -- (10.5,0.5);
\draw (10,0) -- (10.5,0);
\node at (10.25,1.75) {$x$};
\node at (10.25,1.25) {$t$};
\filldraw (10.25,0.75) circle (1.5pt);
\filldraw (10.75,1.75) circle (1.5pt);
\node at (10.9,-0.75) {\footnotesize $(212)\leftrightarrow pqh^2$};

\draw (12,-0.5) -- (12,2);
\draw (12.5,0) -- (12.5,2);
\draw (13,1.5) -- (13,2);
\draw (12,2) -- (13.5,2);
\draw (12,1.5) -- (13,1.5);
\draw (12,1) -- (12.5,1);
\draw (12,0.5) -- (12.5,0.5);
\draw (12,0) -- (12.5,0);
\node at (12.25,1.75) {$x$};
\filldraw (12.25,1.25) circle (1.5pt);
\filldraw (12.75,1.75) circle (1.5pt);
\node at (12.9,-0.75) {\footnotesize $(2112)\leftrightarrow q^2h^2$};

\end{tikzpicture}
\caption{$(p,q)$-weights of rook placements on the Ferrers board of $(YX)^3$ ($s=2$).}\label{ABNOS2}
\end{figure}

\end{example}

Similar to the case $s=0$, we can also consider normal ordering words containing letters $Z_p$. In fact, the considerations which led to Proposition~\ref{PropPQWeylNO} were independent from $s$, so they can be used here verbatim, leading to the following result.
\begin{proposition}\label{PropPQWeylNOS}
Let $\mathrm{w}_{{\bf m},{\bf n},{\bf u}}$ be an arbitrary word in the letters $X,Y$ and $Z_p$ satisfying the commutation relations \eqref{pqdefgenWeyl} of the $(p,q)$-deformed generalized Weyl algebra $A_{s;h|p,q}$. Then one has the normal ordering result
\begin{equation}\label{pqrookNOGS}
\mathrm{w}_{{\bf m},{\bf n},{\bf u}}=\sum_{k=0}^{|{\bf m}|-m_1} p^{|{\bf u}|(|{\bf m}|-k)+\mathscr{L}({\bf m},{\bf n},{\bf u})} \mathscr{R}_{s,h; p,q}[B_{\mathrm{w}_{{\bf m},{\bf n},{\bf u}}^{\ast}}, k] \, Y^{|{\bf n}|+(s-1)k}Z_p^{|{\bf u}|+k}X^{|{\bf m}|-k},
\end{equation}
where $\mathrm{w}_{{\bf m},{\bf n},{\bf u}}^{\ast}$ denotes the reduced word of $\mathrm{w}_{{\bf m},{\bf n},{\bf u}}$ and $\mathscr{L}({\bf m},{\bf n},{\bf u})$ is defined in \eqref{PQExponent}.
\end{proposition}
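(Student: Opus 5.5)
Proposition~\ref{PropPQWeylNOS} follows from Theorem~\ref{TheoWordNOS} in the same way that Proposition~\ref{PropPQWeylNO} followed from Theorem~\ref{TheoWordNO}. The plan has three steps.

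\emph{Step 1: extract the letters $Z_p$.} Start from \eqref{intermediate}:
\[
\mathrm{w}_{{\bf m},{\bf n},{\bf u}}=p^{\mathscr{L}({\bf m},{\bf n},{\bf u})}\,\mathrm{w}^{\ast}_{{\bf m},{\bf n},{\bf u}}\,Z_p^{|{\bf u}|}.
\]
Its derivation uses only the two relations $XZ_p=pZ_pX$ and $Z_pY=pYZ_p$. In the form $Z_pX=p^{-1}XZ_p$ and $Z_pY=pYZ_p$, they say that moving one $Z_p$ to the right past a $Y$ gives a factor $p$ and past an $X$ gives $p^{-1}$. Neither relation involves $s$, so \eqref{intermediate} holds verbatim in $A_{s;h|p,q}$. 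I would briefly re-check the exponent bookkeeping of \eqref{PQExponent}. For the block $Z_p^{u_{2k-1}}$, the letters to its right are $X^{m_k}$ and the lower blocks. These contain $\sum_{j\le k}m_j$ letters $X$ and $\sum_{j\le k-1}n_j$ letters $Y$, giving the exponent $u_{2k-1}\bigl(\sum_{j\le k-1}n_j-\sum_{j\le k}m_j\bigr)$. The factor $Z_p^{u_2r}$ is handled the same way, and the sum of these exponents is $\mathscr{L}({\bf m},{\bf n},{\bf u})$.

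\emph{Step 2: normal order the reduced word.} The reduced word $\mathrm{w}^{\ast}_{{\bf m},{\bf n},{\bf u}}$ equals $\mathrm{w}_{{\bf m},{\bf n}}$, with Ferrers board $B_{\mathrm{w}^{\ast}_{{\bf m},{\bf n},{\bf u}}}$. Applying \eqref{pqrookNOS} gives
\[
\mathrm{w}_{{\bf m},{\bf n},{\bf u}}=p^{\mathscr{L}}\sum_{k=0}^{|{\bf m}|-m_1}\mathscr{R}_{s,h;p,q}[B_{\mathrm{w}^{\ast}},k]\,Y^{|{\bf n}|+(s-1)k}Z_p^{k}X^{|{\bf m}|-k}Z_p^{|{\bf u}|}.
\]

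\emph{Step 3: move $Z_p^{|{\bf u}|}$ into place.} Move $Z_p^{|{\bf u}|}$ left past $X^{|{\bf m}|-k}$. By $XZ_p=pZ_pX$, each of the $|{\bf u}|(|{\bf m}|-k)$ transpositions contributes a factor $p$, so
\[
X^{|{\bf m}|-k}Z_p^{|{\bf u}|}=p^{|{\bf u}|(|{\bf m}|-k)}Z_p^{|{\bf u}|}X^{|{\bf m}|-k}.
\]
Merging this with $Z_p^k$ gives $Z_p^{|{\bf u}|+k}$, which is exactly \eqref{pqrookNOGS}.

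\emph{Main obstacle.} There is no real difficulty: all $s$-dependence is contained in Theorem~\ref{TheoWordNOS}, which we may assume. The only care needed is the sign convention in Step 1. The statement of Proposition~\ref{PropPQWeylNO} fixes it as above, and Step 1 reuses it verbatim.
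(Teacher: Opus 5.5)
Your proposal is correct and matches the paper's argument: the paper observes that the derivation of \eqref{intermediate} uses only $XZ_p=pZ_pX$ and $Z_pY=pYZ_p$ and so does not depend on $s$, then applies Theorem~\ref{TheoWordNOS} to the reduced word and moves $Z_p^{|{\bf u}|}$ left past $X^{|{\bf m}|-k}$ to pick up $p^{|{\bf u}|(|{\bf m}|-k)}$, exactly as in your Steps 1--3. One small typo: in Step 1, $Z_p^{u_2r}$ should read $Z_p^{u_{2k}}$.
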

By comparing coefficients, we identified $\mathfrak{S}_{s;h}(n,\ell |p,q)=\mathscr{R}_{s,h; p,q}[J_{n}, n-\ell ]$ in Corollary~\ref{PQStirlingRookS}. Similar to the case $s=0$ (see, in particular, Proposition~\ref{PQPropStirRook}), we want to give a combinatorial interpretation for the recurrence relation of the generalized Stirling numbers $\mathfrak{S}_{s;h}(n,\ell |p,q)$. In analogy to above, we define  
$$
T_{s,h;p,q}(n,k):= \mathscr{R}_{s,h; p,q}[J_n,n- k].
$$
\begin{proposition}\label{PQPropStirRookS} The numbers $T_{s,h;p,q}(n,k)$ satisfy $T_{s,h;p,q}(0,0)=1$ and the recurrence relation 
$$
T_{s,h;p,q}(n+1,k)=p^{n-k+1}q^{s(n-k+1)+k-1}T_{s,h;p,q}(n,k-1)+h[s(n-k)+k]_{p,q}T_{s,h;p,q}(n,k).
$$
\end{proposition}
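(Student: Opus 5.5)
We follow the pattern of the proofs of Proposition~\ref{PQPropStirRook} and Proposition~\ref{PQPropCycRook}. We write $T_{s,h;p,q}(n+1,k)=\mathscr{R}_{s,h;p,q}[J_{n+1},n-k+1]$ and note that $J_{n+1}$ arises from $J_n$ by adjoining a column of length $n$ on the left. Since rooks are placed from right to left under the $s$-row creation rule, this column is the last one processed. Every $\phi\in\mathcal{R}_s(J_{n+1},n-k+1)$ is therefore either of Type~I (the new column is empty, and all $n-k+1$ rooks lie in $J_n$) or of Type~II (the restriction to $J_n$ is a placement $\phi'\in\mathcal{R}_s(J_n,n-k)$, and one more rook is placed in the new column). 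The initial value $T_{s,h;p,q}(0,0)=1$ holds because the empty placement on the empty board has weight $1$.

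\emph{Type I.} The weights of the boxes of $J_n$ are unchanged, because the class of a box depends only on rooks to its right, below it, or in its own column; the new column lies to the left of all of these. Using the $s$-row creation rule (Remark~\ref{IRookEq}), after the $n-k+1$ rooks in $J_n$ have been placed, the new column has effectively been lengthened. In the paper's drawings (Figure~\ref{FigFileSRook}) it has $n+(s-1)(n-k+1)$ boxes, of which the $n-k+1$ boxes in the rows of rooks (each lying to the left of a rook, in an unoccupied column) are $\ell$-boxes, giving $p^{n-k+1}$. The remaining $n-(n-k+1)+s(n-k+1)=s(n-k+1)+k-1$ boxes are $e$-boxes, giving the factor $q^{s(n-k+1)+k-1}$. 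Summing over Type~I yields $p^{n-k+1}q^{s(n-k+1)+k-1}T_{s,h;p,q}(n,k-1)$. We will check this count against the example $s=2$, $n=2$, where board $(2111)$ with weight $pq^3h$ should match.

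\emph{Type II.} Given $\phi'$ with $n-k$ rooks in $J_n$, the new column, after the creation rule, offers $n+(s-1)(n-k)-(n-k)=s(n-k)+k$ admissible positions. These are the free rows, i.e.\ not occupied by the row of an earlier rook in the non-attacking interpretation. Placing the new rook in the $r$-th free position from the top makes the $r-1$ free boxes above it $t$-boxes, giving $p^{r-1}$. The free boxes below it remain $e$-boxes, contributing $q^{s(n-k)+k-r}$. The boxes in rows of earlier rooks become cancelled boxes rather than $\ell$-boxes, since they lie in the column of another rook, so they contribute nothing. The extra rook contributes $h$. The boxes of $J_n$ keep their classes. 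Summing over $r$ gives $h\sum_{r=1}^{s(n-k)+k}p^{r-1}q^{s(n-k)+k-r}=h[s(n-k)+k]_{p,q}$ times $\omega_{s;p,q}(J_n;\phi')$. Summing over $\phi'$ gives the second term. Adding both types yields the recurrence.

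The main obstacle is the bookkeeping in the new column: exactly which boxes exist after the $s$-row creation rule, and which are free versus cancelled. In particular, in Type~II the boxes between or below earlier rook rows must be assigned consistently with the five-class definition, so that the totals come out as $p^{r-1}q^{s(n-k)+k-r}$ and not with a spurious extra $q$. We will fix the convention using Figure~\ref{ABNOS2}: placements $(22)$ and $(212)$ give $p^2h^2$ and $pqh^2$, and placement $(2112)$ gives $q^2h^2$, consistent with $[3]_{p,q}$ for $s=2$, $n=2$, $k=1$.
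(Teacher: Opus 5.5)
Your proposal is correct and follows the paper's proof essentially verbatim: the same Type~I/Type~II split according to whether the adjoined column is empty, the same $\ell$-box/$e$-box count in that column, and the same geometric sum $h\sum_{r}p^{r-1}q^{s(n-k)+k-r}=h[s(n-k)+k]_{p,q}$ with the rook-row boxes counted as cancelled. One arithmetic slip to fix: under the paper's convention the adjoined column has $n+s(n-k+1)$ boxes in Type~I and $n+s(n-k)$ boxes in Type~II, not $n+(s-1)(\cdot)$, so your identity $n+(s-1)(n-k)-(n-k)=s(n-k)+k$ is false as written. The final counts $s(n-k+1)+k-1$ and $s(n-k)+k$ that you actually use are nevertheless correct.
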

\begin{proof}
The argument is almost the same as for the case $s=0$. Let us consider $T_{s,h;p,q}(n+1,k)= \mathscr{R}_{s,h; p,q}[J_{n+1},n- k+1]$, i.e., the set of $(n-k+1)$ rook placements on $J_{n+1}$. 

We start with the rook placements of Type I. Let $\phi$ be such a placement. Considered as a placement on $J_{n}$ it has weight $\omega_{p,q}(J_n;\phi)$. Due to the $s$-row creation rule, placing $(n-k+1)$ rooks leads to $s(n-k+1)$ additional boxes in the adjoined column to the left, which has in total $n+s(n-k+1)$ boxes. In this column there exist $(n-k+1)$ $\ell$-boxes, hence $n+(s-1)(n-k+1)$ $e$-boxes. Thus, considering $\phi$ as a placement on $J_{n+1}$ it has weight $\omega_{p,q}(J_{n+1};\phi)=p^{n-k+1}q^{s(n-k+1)+k-1}\omega_{p,q}(J_n;\phi)$. Since this holds for all placements of type I, their contribution is given by
$$
p^{n-k+1}q^{s(n-k+1)+k-1}\sum\limits_{\phi\in\mathcal{R}_s(J_n, n-k+1)} \omega_{p,q}(J_n;\phi)=p^{n-k+1}q^{s(n-k+1)+k-1}\mathscr{R}_{s,h; p,q}[J_n,n- k+1],
$$
and this equals $p^{n-k+1}q^{s(n-k+1)+k-1}T_{s,h;p,q}(n,k-1)$.

Now, let us turn to the placements of type II. Let $\phi'$ be a placement of $(n-k)$ rooks on $J_n$ with weight $\omega_{p,q}(J_n;\phi')$. Due to the $s$-row creation rule, placing $(n-k)$ rooks leads to additional $s(n-k)$ boxes in the adjoined column to the left which has in total $n+s(n-k)$ boxes. We have $n+(s-1)(n-k)=s(n-k)+k$ ``free'' boxes where we can place a rook (we are not allowed to place it in a row where one of the $(n-k)$ rooks lies). Placing the rook in the $r$-th box from above (where $r=1,\ldots, s(n-k)+k$) will result in one additional rook box, $(r-1)$ additional $t$-boxes, and $s(n-k)+k-r$ additional $e$-boxes (and $(n-k)$ cancelled boxes which do not contribute to the weight), implying for this placement the weight $h p^{r-1}q^{s(n-k)+k-r}\omega_{p,q}(J_n;\phi')$. Thus, summing the contribution of the weights of all $s(n-k)+k$ placements on $J_{n+1}$ resulting from the placement $\phi'$, one finds
$$
h(p^0q^{s(n-k)+k-1}+\cdots+p^{s(n-k)+k-1}q^{0})\,\omega_{p,q}(J_n;\phi')=h[s(n-k)+k]_{p,q}\,\omega_{p,q}(J_n;\phi').
$$
Since this holds for all placements of type II, their contribution is given by
$$
h[s(n-k)+k]_{p,q}\sum\limits_{\phi'\in\mathcal{R}_s(J_n, n-k)} \omega_{p,q}(J_n;\phi')=h[s(n-k)+k]_{p,q}\mathscr{R}_{s,h; p,q}[J_n,n- k].
$$
The right-hand side equals $h[s(n-k)+k]_{p,q}T_{s,h;p,q}(n,k)$. Adding the contributions from placements of type I and type II yields the assertion. 
\end{proof}
The preceding proposition gives a combinatorial interpretation for the factors in the recurrence relation of the generalized Stirling numbers $\mathfrak{S}_{s;h}(n,k|p,q)$. Note that the generalized $(p,q)$-deformed Stirling numbers considered by Remmel and Wachs \cite{JRMW2004} satisfy different recurrence relations.

\begin{remark}
Recall that in the case $p=q=1$  the (unsigned) Lah numbers $L(n,k)$ are given as generalized Stirling numbers for $s=1/2$ and $h=2$. In the $q$-deformed situation, one also has a close relation, see, e.g., \cite[Section 9.4.4]{TMMS2016}. However, giving a combinatorial interpretation using rook placements on the staircase board seems to be difficult. Following Garsia and Remmel \cite{GR1980}, a rectangular board will be called {\em Laguerre board} and the board with $n$ columns of height $n-1$ will be denoted by ${\mathcal L}_n$. Since we use the same $q$-statistic as Garsia and Remmel (see Section~\ref{SectNORook}), we can state their result as $L_q(n,k)=R_{0,1;q}[{\mathcal L}_n,n-k]$. Thus, in this framework it would be natural to introduce a $(p,q)$-deformed variant by letting $L_{p,q}(n,k):=\mathscr{R}_{0,1; p,q}[{\mathcal L}_n,n- k]$ and deriving its properties and comparing with other definitions, see, e.g., \cite{RS2016,SY2017}. Schlosser and Yoo considered in \cite{SY2017} file numbers on the {\em Abel board} ${\mathcal A}_n$ consisting of $n-1$ columns of height $n$ and considered the {\em Abel numbers} as file numbers, $t_{n,k}=f_{n-k}({\mathcal A}_n)$. Thus, one has $t_{n,k}=\mathscr{R}_{1,1; 1,1}[{\mathcal A}_n,n- k]$ and it would also be interesting to define $(p,q)$-deformed versions $t_{n,k}(p,q):=\mathscr{R}_{1,1; p,q}[{\mathcal A}_n,n- k]$ and compare to the results of \cite{SY2017}. 
\end{remark}

Using the same arguments as above for deriving the recurrence relation of the generalized Stirling numbers, we can give a $(p,q)$-analog to the recurrence relation for the $q$-deformed $s$-rook numbers given by Celeste et al. \cite[Theorem 7]{CCG2017}. For this, we consider the Ferrers board $B_{\lambda_{n-1}\lambda_{n-2}\cdots \lambda_{1}\lambda_{0}}$, i.e., we label the columns from right to left, and the first column from the right has $\lambda_{0}$ boxes, while the first column to the left has $\lambda_{n-1}$ boxes. If we let $\lambda=\lambda_{n-1}\lambda_{n-2}\cdots \lambda_{1}\lambda_{0}$ be the associated partition, we denote the above board also as $B_n(\lambda)$, and $B_{n-1}(\lambda)=B_{\lambda_{n-2}\cdots  \lambda_{0}}$. 
\begin{theorem}
Let $s >0$ and let $0\leq k \leq n$. Then the $(p,q)$-deformed $s$-rook numbers $\mathscr{R}_{s,h; p,q}[B_n(\lambda),k]$ satisfy the following recursive formula 
\begin{eqnarray*}
\mathscr{R}_{s,h; p,q}[B_n(\lambda),k]&=& p^kq^{\lambda_{n-1}+k(s-1)}\mathscr{R}_{s,h; p,q}[B_{n-1}(\lambda),k] \\ && +h[\lambda_{n-1}+(k-1)(s-1)]_{p, q}\mathscr{R}_{s,h; p,q}[B_{n-1}(\lambda),k-1],
\end{eqnarray*}
with the boundary condition
$$
\mathscr{R}_{s,h; p,q}[B_n(\lambda),n] =h^n \prod_{j=0}^{n-1}[\lambda_j+(j-1)(s-1)]_{p, q},
$$
as well as $\mathscr{R}_{s,h; p,q}[B_0(\lambda),0] =1$ and $ \mathscr{R}_{s,h; p,q}[B_n(\lambda),0]=q^{|\lambda|}$, where $|\lambda|=\lambda_{n-1}+\cdots+\lambda_0$ denotes the length of the partition $\lambda$.
\end{theorem}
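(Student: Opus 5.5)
The plan is to copy the proof of Proposition~\ref{PQPropStirRookS}, decomposing by the leftmost column. Rooks are placed from right to left, so every $s$-rook placement $\phi\in\mathcal{R}_s(B_n(\lambda),k)$ restricts to a placement $\phi'$ on $B_{n-1}(\lambda)$. Either $\phi'$ has $k$ rooks and the leftmost column (of original height $\lambda_{n-1}$) is empty, or $\phi'$ has $k-1$ rooks and the remaining rook sits in the leftmost column. This is a bijection between $\mathcal{R}_s(B_n(\lambda),k)$ and the disjoint union of $\mathcal{R}_s(B_{n-1}(\lambda),k)$ with pairs $(\phi',\text{free cell of the leftmost column})$, where $\phi'\in\mathcal{R}_s(B_{n-1}(\lambda),k-1)$.

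The key step is to check that the classes of boxes in the columns of $B_{n-1}(\lambda)$ are unchanged when we pass from $\phi'$ to $\phi$. A rook in the leftmost column has nothing to its left. It is also placed last, so it does not enlarge the columns to its right. Hence the classification of those columns (rook, $t$, $\ell$, cancelled, $e$) is the same for $\phi'$ and $\phi$. It follows that $\omega_{s;p,q}(B_n(\lambda);\phi)$ equals $\omega_{s;p,q}(B_{n-1}(\lambda);\phi')$ times the contribution of the leftmost column, and the whole proof reduces to counting that column.

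\emph{Empty leftmost column.} After the $k$ rooks of $\phi'$, the column has $\lambda_{n-1}+sk$ cells. Exactly $k$ of them lie in rows of rooks. In the equivalent interpretation of Remark~\ref{IRookEq}, each rook's row extends to the left as $s$ rows, of which one is the rook's own row. That row gives one $\ell$-box, since the cell lies to the left of that rook and in no column of a rook. The other $s-1$ created rows and the $\lambda_{n-1}-k$ untouched rows give $e$-boxes, for $\lambda_{n-1}+k(s-1)$ $e$-boxes in total. This gives the factor $p^kq^{\lambda_{n-1}+k(s-1)}$.

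\emph{Rook in the leftmost column.} After $\phi'$ the column has $\lambda_{n-1}+s(k-1)$ cells. The $k-1$ cells in rows of existing rooks are forbidden (in the non-attacking model), which leaves $N=\lambda_{n-1}+(k-1)(s-1)$ free cells. Those forbidden cells are cancelled boxes: they lie to the left of a rook and in the column of another rook. If the new rook is in the $r$-th free cell from the top, the free cells above it are $t$-boxes and the free cells below it are $e$-boxes. The weight is therefore $h\,p^{r-1}q^{N-r}$, and summing over $r$ gives $h[N]_{p,q}$. Adding the two contributions gives the recursion.

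The main obstacle is to make sure these local counts are consistent with the ordering convention. The ordering of created rows relative to original rows (top row belongs to the rook) must not change which free cells count as ``above'' the new rook. The cancelled-box definition must also exclude exactly the $k-1$ forbidden cells. For both checks I would use Figure~\ref{FigFileSRook}.

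The boundary conditions then follow by iteration. With $k=0$ no rook exists, every cell is an $e$-box, and the weight is $q^{|\lambda|}$; the case $B_0$ is the empty product, equal to $1$. With $k=n$ the first term vanishes, because $\mathcal{R}_s(B_{n-1}(\lambda),n)=\emptyset$. Iterating the second term gives a product over the columns. Column $j$ (counted from the right, $j=0,\dots,n-1$) has $j$ rooks to its right, so it contributes $h[\lambda_j+j(s-1)]_{p,q}$. The stated formula has $(j-1)(s-1)$ instead, and I would reconcile this index with the paper's convention; the most likely explanation is an index shift in the statement.
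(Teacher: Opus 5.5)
Your proposal is correct and is essentially the paper's own proof: you split according to whether the leftmost column of $B_n(\lambda)$ is empty (type I) or holds the last-placed rook (type II), note that the weights on $B_{n-1}(\lambda)$ are unchanged, and count locally to get $p^kq^{\lambda_{n-1}+k(s-1)}$ resp.\ $h[\lambda_{n-1}+(k-1)(s-1)]_{p,q}$. You are also right about the boundary value: iterating the type II term gives $h^n\prod_{j=0}^{n-1}[\lambda_j+j(s-1)]_{p,q}$, and the stated $(j-1)(s-1)$ is inconsistent with the recurrence itself (for $B_{21}=J_3$ and $s=2$ it would give $0$, whereas the paper's own example gives $\mathscr{R}_{2,h;p,q}[J_3,2]=h^2[3]_{p,q}$).

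One slip to repair in the type I count: your per-rook split (``$s$ rows, one of them the rook's own, the other $s-1$ created'') mixes in the $(s-1)$-row picture of Remark~\ref{IRookEq} and actually sums to $\lambda_{n-1}+k(s-2)$. Argue directly instead: the column has $\lambda_{n-1}+sk$ cells, exactly $k$ of them lie in rook rows, so there are $\lambda_{n-1}+sk-k=\lambda_{n-1}+k(s-1)$ $e$-boxes, which is the total you state.
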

\begin{proof} Exactly as in the proof of Proposition~\ref{PQPropStirRookS}, we partition the set of placement of $k$ rooks on $B_n(\lambda)$ into those of type I (all $k$ rooks are placed in $B_{n-1}(\lambda)$ and the column to the left is empty) and those of type II ($(k-1)$ rooks are placed in $B_{n-1}(\lambda)$ and one rook is placed in the column to the left). The contribution of all placements of type I is given by $p^kq^{\lambda_{n-1}+k(s-1)}\mathscr{R}_{s,h; p,q}[B_{n-1}(\lambda),k] $, while the contribution of all placements of type II is given by $h[\lambda_{n-1}+(k-1)(s-1)]_{p, q}\mathscr{R}_{s,h; p,q}[B_{n-1}(\lambda),k-1]$. Adding both contributions gives the asserted recurrence. The boundary condition for $\mathscr{R}_{s,h; p,q}[B_n(\lambda),n]$ can be checked directly and follows also by iterating $n$ times placements of type II. The boundary condition for $\mathscr{R}_{s,h; p,q}[B_0(\lambda),0]$ is trivial, and the one for $ \mathscr{R}_{s,h; p,q}[B_n(\lambda),0]$ follows directly from the definition of the $(p,q)$-deformed weights since all $|\lambda|$ boxes are empty boxes when no rook is placed.
\end{proof}
\begin{remark}
For $p=1$, we recover the recurrence given in \cite[Theorem 7]{CCG2017} (note that there seem to be two typos in the boundary conditions mentioned in \cite{CCG2017}). Specializing further $q=1$, one recovers the recurrence derived by Goldman and Haglund \cite{JGJH2000}.
\end{remark}

Recalling \eqref{IdentStirRookS}, we let $S_{s, h; p, q}^{\lambda}(n, k) :=\mathscr{R}_{s,h; p,q}[B_n(\lambda),n-k]$. The preceding theorem implies that
$$
S_{s, h; p, q}^{\lambda}(n, k)=p^{n-k}q^{\lambda_{n-1}+(n-k)(s-1)}S_{s, h; p, q}^{\lambda}(n-1, k-1)+h[\lambda_{n-1}+(n-k-1)(s-1)]_{p, q}S_{s, h; p, q}^{\lambda}(n-1, k).
$$

\begin{example}
Consider the staircase board $J_n$ where $\lambda_{\ell}=\ell$ for $0\leq \ell \leq n-1$. Then the above recurrence reduces to \eqref{Stirlingrecu}. Letting $s=0$ and $h=1$, we obtain the recurrence \eqref{pqStirlingRecu} for the $(p, q)$-generalized Stirling numbers of the second kind $S_{p, q}(n, k)$ discussed in \cite{LO2020}. Moreover, for $p=1$, we obtain the recursive formula for the $q$-Stirling numbers of the second kind $S_{q}(n, k)$ (see \cite{TMMSMS2012}). On the other hand, for $p=q=1$, the above recurrence reduces to the one for the generalized Stirling numbers $\mathfrak{S}_{s; h}(n, k)$ \cite{TMMSMS2011}.
\end{example}

\section{Conclusion}\label{concl}
In this paper, we considered normal ordering in the $(p, q)$-deformed generalized Weyl algebra $A_{s;h|p,q}$ generated by variables $X, Y$, and $Z_p$, satisfying the $(p, q)$-commutation relations given by $XY-qYX=h Y^sZ_{p}, XZ_p=pZ_pX$, and $Z_pY=pYZ_p$, where $s\in \mathbb{N}_0$. Along the way to define $(p,q)$-weights for rook placements adapted to the normal ordering of variables satisfying the above commutation relations, we introduced augmented Ferrers boards as representations of words in $X,Y,$ and $Z_p$. This is a slight extension of the conventional Ferrers boards (representing words in $X,Y$) to include the additional variable $Z_p$. As noted, $Z_p$ ``almost'' commutes with $X,Y$ so this extension consists mainly in an appropriate marking of the board. By translating these properties into corresponding $(p,q)$-weights for rook placements, we defined $(p,q)$-deformed $s$-rook numbers which are adapted to the normal ordering problem for variables satisfying the above commutation relations and which reduce to the one considered by Celeste et al. \cite{CCG2017} for $p=1$. That is, the normal ordering coefficients of a word in $X,Y$ and $Z_p$ are given by the $(p,q)$-deformed $s$-rook numbers of the Ferrers board associated with the word. This interpretation allowed us, in particular, to provide a combinatorial interpretation for the recurrence relation of the $(p,q)$-deformed generalized Stirling numbers. Looking forward, this interplay opens up several exciting avenues for future research. \\
In a sequel \cite{TLM3}, we will extend this work to investigate the binomial formula within this $(p, q)$-deformed generalized Weyl algebra.

\end{document}